\newtheorem{theorem}{Theorem}[section]
\newtheorem{corollary}[theorem]{Corollary}
\newtheorem{proposition}[theorem]{Proposition}
\newcommand{\f}{\operatorname}
\title{The Generalized Weighted Lindley Distribution:  Properties, Estimation and Applications }
\author{P. L. RAMOS,\thanks{ Email: pedrolramos@usp.br}  
\ \ \ \ F. LOUZADA\thanks{ Email: louzada@icmc.usp.br} \ \  \  \\ \\ Institute of Mathematical Science and Computing \\ Universidade de Sao Paulo, Sao Carlos-SP, Brazil } 
\date{January/2016}
\begin{document}

\maketitle

\begin{abstract}
In this paper, we proposed a new lifetime distribution namely generalized weighted Lindley (GLW) distribution. The GLW distribution is a useful generalization of the weighted Lindley distribution, which accommodates increasing, decreasing, decreasing-increasing-decreasing, bathtub, or unimodal hazard functions, making the GWL distribution a flexible model for reliability data. A significant account of mathematical properties of the new distribution are presented. Different estimation procedures are also given such as, maximum likelihood estimators, method of moments, ordinary and weighted least-squares, percentile, maximum product of spacings and minimum distance estimators. The different estimators are compared by an extensive numerical simulations. Finally, we analyze two data sets for illustrative purposes, proving that the GWL outperform several usual three parameters lifetime distributions.

 \noindent
 \textbf{Keywords}: Generalized weighted Lindley distribution, Maximum Likelihood Estimation, Maximum product of spacings.
\end{abstract}

\section{Introduction}

In recent years, several new extensions of the exponential distribution has been introduced in the literature for describing real problems. Ghitany et al. (2008) investigated different properties of the Lindley distribution and outlined that in many cases the Lindley distribution is a better model than one based on the exponential distribution. Since then, many generalizations of the Lindley distribution have been introduced, such as generalized Lindley (Zalerzadeh and Dolati, 2009), extended Lindley (Bakouch et al., 2012), exponential Poisson Lindley (Barreto-Souza and Bakouch, 2013), Power Lindley (Ghitany et al., 2013) distribution, among others.

Ghitany et al. (2011) introduced a new class of weighted Lindley (WL) distribution adding more flexibility to the Lindley distribution. Let $T$ be a random variable with a WL distribution the probability density function (p.d.f) is given by
\begin{equation}\label{fdpwl} 
f(t|\lambda,\phi)=\frac{\lambda^{\phi+1}}{(\lambda+\phi)\Gamma(\phi)}t^{\phi-1}(1+t)e^{-\lambda t},
 \end{equation}
for all $t>0$ , $\phi>0$ and $\lambda>0$ and $\Gamma(\phi)=\int_{0}^{\infty}{e^{-x}x^{\phi-1}dx}$ is known as gamma function. One of its peculiarities is that the hazard function can has an increasing $(\phi\geq 1)$ or bathtub $(0<\phi<1)$ shape. Different properties of this model and estimation methods were studied by Mazucheli et al. (2013), Ali (2013), Wang (2015), Al-Mutairi et al. (2015) among others.

In this paper, a new lifetime distribution family is proposed, which is an direct generalization of the weighted Lindley distribution. The p.d.f is given by
\begin{equation}\label{fdpgwl} 
f(t|\phi,\lambda,\alpha)=\frac{\alpha\lambda^{\alpha\phi}}{(\lambda+\phi)\Gamma(\phi)}t^{\alpha\phi-1}(\lambda+(\lambda t)^{\alpha})e^{-(\lambda t)^\alpha},
 \end{equation}
for all $t>0$,  $\phi>0, \lambda>0$ and $\alpha>0$. Important probability distributions can be obtained from the GWL distribution as the weighted Lindley distribution ($\alpha=1$) , Power Lindley distribution ($\phi=1$) and the Lindley distribution ($\phi=1$ and $\alpha=1$). Due this relationship, such model could also be named as weighted power Lindley or generalized power Lindley distribution. We present a proof that this model has different forms of the hazard function, such as: increasing, decreasing, bathtub, unimodal or decreasing-increasing-decreasing shape, making the GWL distribution a flexible model for reliability data. Moreover, a significant account of mathematical properties of the new distribution are provided.

The inferential procedures of the parameters of the GLW distribution are presented considering different estimation methods such as: maximum likelihood estimators (MLE), methods of moments (ME), ordinary least-squares estimation (OLSE), weighted least-squares estimation (WLSE), maximum product of spacings (MPS), Cramer-von Mises type minimum distance (CME), Anderson-Darling (ADE) and Right-tail Anderson-Darling (RADE).  We compare the performances of the such different methods using extensive numerical simulations. Finally, we analyze two data sets for illustrative purposes, proving that the GWL outperform several usual three parameters lifetime distributions such as: the generalized Gamma distribution (Stacy, 1962), the generalized Weibull (GW) distribution (Mudholkar et al., 1996), the generalized exponential-Poisson (GEP) distribution (Barreto-Souza and Cribatari-Neto, 2009) and the exponentiated Weibull (EP) distribution (Mudholkar et al., 1995).

The paper is organized as follows. In Section 2, we provide a significant account of mathematical properties of the new distribution. In Section 3,  we discuss the eight estimation methods considered in this paper. In Section 4 a simulation study is presented in order to identify the most efficient procedure. In Section 5 the methodology is illustrated in two real data sets. Some final comments are presented in Section 6.

\section{Generalized Weighted Lindley distribution}

The Generalized weighted Lindley distribution (\ref{fdpgwl}) can be expressed as a two-component mixture
\begin{equation*}\label{fdpwl2} 
f(t|\phi,\lambda,\alpha)= pf_1(t|\phi,\lambda,\alpha)+(1-p)f_2(t|\phi,\lambda,\alpha)
 \end{equation*}
where  $p=\lambda/(\lambda+\phi)$ and  $T_j\sim\f{GG}(\phi+j-1,\lambda,\alpha)$, for $j=1,2$, i.e, $f_j(t|\lambda,\phi)$ has Generalized Gamma distribution, given by
\begin{equation}\label{fdpgg} 
f_j(t|\phi,\lambda,\alpha)=\frac{\alpha}{\Gamma(\phi+j-1)}\lambda^{\alpha(\phi+j-1)}t^{\alpha(\phi+j-1)-1}e^{-(\lambda t)^\alpha}.
 \end{equation}
 
The behaviours of the p.d.f. (\ref{fdpgwl}) when $t\rightarrow0$ and $t\rightarrow\infty$ are, respectively, given by
\begin{equation*}
f(0)=
\begin{cases}
 \infty, & \text{if }\alpha\phi<1 \\
 \dfrac{\alpha\lambda^2}{(\lambda+\phi)\Gamma(\phi)}, & \text{if }\alpha\phi=1 \\
 0, & \text{if }\alpha\phi>1 
\end{cases}
,\ \ \ \ \ \ \ \ f(\infty)= 0.
\end{equation*}

Figure \ref{frisgwl} gives examples of the shapes of the density function for different values of $\phi, \lambda$ and $\alpha$.
%\vspace{-0.8cm}
\begin{figure}[!htb]
\centering
\includegraphics[scale=0.53]{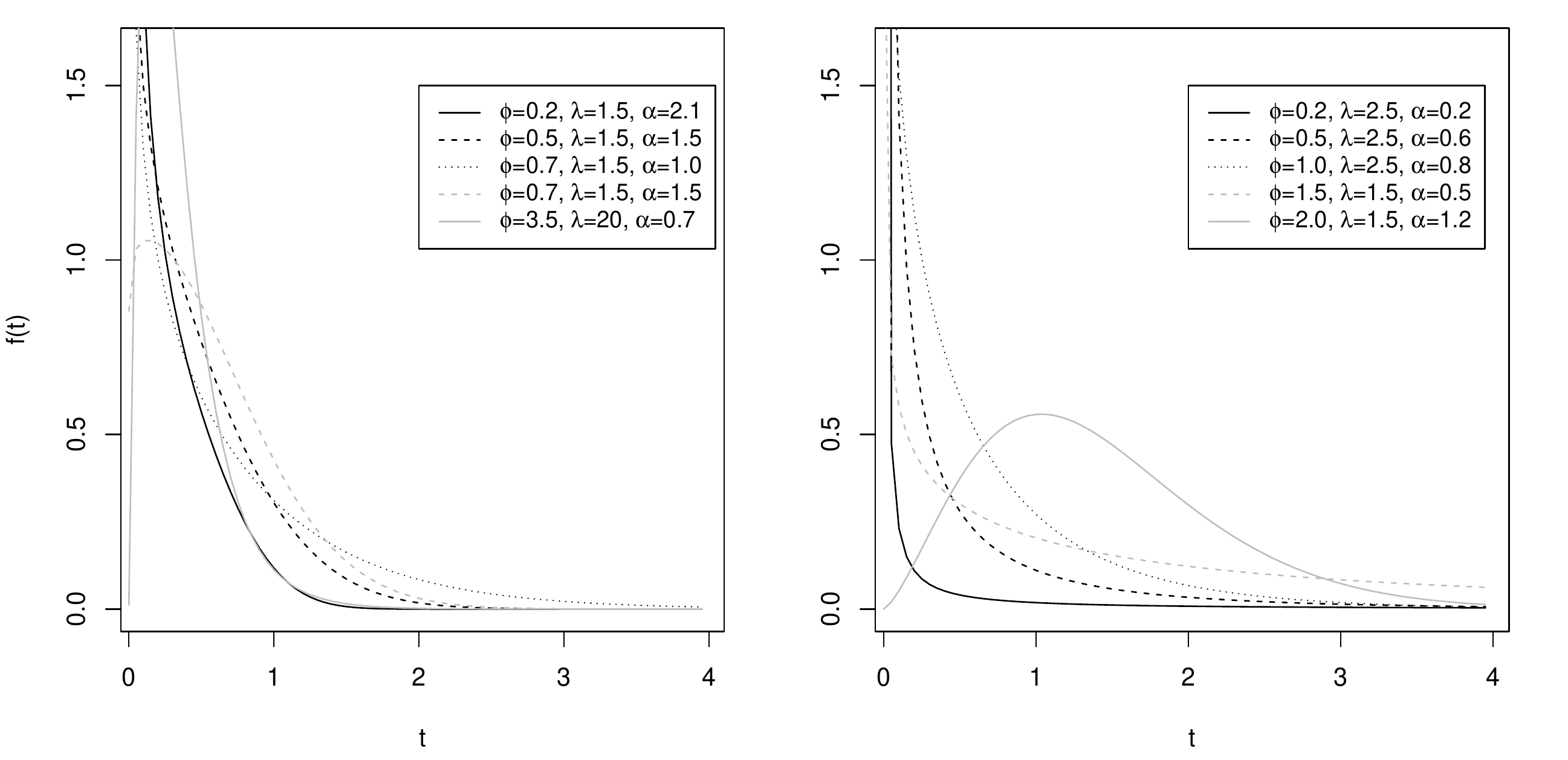}
\caption{Density function shapes for GWL distribution considering different values of $\phi, \lambda$ and $\alpha$.}\label{frisgwl}
\end{figure}

The cumulative distribution function from the GWL distribution is given by
\begin{equation}\label{densagwl}
F(t|\phi,\lambda,\alpha)=\frac{\gamma\left[\phi,(\lambda t)^{\alpha}\right](\lambda+\phi)-(\lambda t)^{\alpha\phi}e^{-(\lambda t)^\alpha}}{(\lambda+\phi)\Gamma(\phi)}\ .
\end{equation}
where  $\gamma[y,x]=\int_{0}^{x}{w^{y-1}e^{-w}}dw$ is the lower incomplete gamma function.

\subsection{Moments}

Many important features and properties of a distribution can be obtained through its moments, such as mean, variance, kurtosis and skewness. In this section, we present some important moments, such as the moment generating function, r-th moment, r-th central moment among others.

\begin{theorem} For the random variable $T$ with $\f{GWL}$ distribution,
the moment generating function is given by
\begin{equation}
M_X(t)=\sum_{r=0}^{\infty} \frac{t^r}{\lambda^r r!}\frac{\left(\frac{r}{\alpha}+\phi+\lambda\right)\Gamma(\frac{r}{\alpha}+\phi)}{(\lambda+\phi)\Gamma(\phi)}  .
\end{equation}
\end{theorem}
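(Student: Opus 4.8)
The plan is to work directly from the definition $M_X(t) = E[e^{tX}] = \int_0^\infty e^{tx} f(x\mid\phi,\lambda,\alpha)\,dx$ and expand the exponential in its power series $e^{tx} = \sum_{r=0}^\infty t^r x^r/r!$. Interchanging summation and integration reduces everything to the raw moments $E[X^r] = \int_0^\infty x^r f(x\mid\phi,\lambda,\alpha)\,dx$, since then $M_X(t) = \sum_{r\ge 0}(t^r/r!)\,E[X^r]$ is precisely the asserted series once $E[X^r]$ is evaluated. Thus the theorem reduces to finding a closed form for the $r$-th moment.

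For the moment I would exploit the two-component mixture representation established above, writing $E[X^r] = p\,E[T_1^r] + (1-p)\,E[T_2^r]$ with $p = \lambda/(\lambda+\phi)$ and $T_j \sim \f{GG}(\phi+j-1,\lambda,\alpha)$. The generalized gamma moments follow from the single substitution $u = (\lambda x)^\alpha$ in the density \eqref{fdpgg}, which converts $\int_0^\infty x^{\alpha\psi + r - 1}e^{-(\lambda x)^\alpha}\,dx$ into a standard gamma integral and gives $E[T^r] = \Gamma(\psi + r/\alpha)/\bigl(\lambda^r\Gamma(\psi)\bigr)$ for $T \sim \f{GG}(\psi,\lambda,\alpha)$. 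Equivalently one can bypass the mixture and integrate the two summands of the density \eqref{fdpgwl} term by term, since the same substitution handles both.

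Taking $\psi = \phi$ and $\psi = \phi+1$ and collecting terms expresses $E[X^r]$ as a combination of $\Gamma(\phi + r/\alpha)$ and $\Gamma(\phi + 1 + r/\alpha)$ over $\lambda^r(\lambda+\phi)\Gamma(\phi)$. The reduction to the compact form in the statement is then a routine application of the functional equation $\Gamma(z+1) = z\Gamma(z)$, used as $\Gamma(\phi+1) = \phi\Gamma(\phi)$ to cancel the mixture weight and as $\Gamma(\phi + 1 + r/\alpha) = (\phi + r/\alpha)\Gamma(\phi + r/\alpha)$ to factor out the common $\Gamma(\phi + r/\alpha)$; this produces the bracketed factor $(r/\alpha + \phi + \lambda)$.

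The genuinely delicate point is the legitimacy of the termwise interchange and the convergence of the resulting series. Because the tail of the density decays like $e^{-(\lambda x)^\alpha}$, the moment generating function exists on a neighbourhood of the origin only when $\alpha \ge 1$; for $0 < \alpha < 1$ the integral $\int_0^\infty e^{tx}f(x\mid\phi,\lambda,\alpha)\,dx$ diverges for every $t>0$. I would therefore either invoke Tonelli's theorem to justify the interchange on the range where the series converges, or read the identity as the formal generating series whose coefficients are the moments $E[X^r]$ derived above, the latter being the sense in which the displayed formula holds for all parameter values.
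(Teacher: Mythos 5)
Your proposal is correct and follows essentially the same route as the paper: both exploit the two-component mixture into generalized gamma distributions, reduce to the GG moments $\Gamma(\psi+r/\alpha)/(\lambda^r\Gamma(\psi))$, and collapse the two resulting series via $\Gamma(\phi+1)=\phi\Gamma(\phi)$ and $\Gamma(\phi+1+r/\alpha)=(\phi+r/\alpha)\Gamma(\phi+r/\alpha)$. The one substantive addition on your side is the (correct) observation that for $0<\alpha<1$ the integral defining $M_X(t)$ diverges for every $t>0$, so the displayed identity must be read as a formal moment series there --- a caveat the paper passes over in silence.
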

\begin{proof}Note that, the moment generating function from GG distribution (\ref{fdpgg}) is given by
\begin{equation*}
M_{X,j}(t)=\sum_{r=0}^{\infty} \frac{t^r}{ r!}\frac{\Gamma(\frac{r}{\alpha}+\phi+j-1)}{\lambda^r\Gamma(\phi+j-1)}.
\end{equation*}

Therefore, as the GWL (\ref{fdpgwl}) distribution can be expressed as a two-component mixture, we have
\begin{equation*}
\begin{aligned}
M_X(t)&= E\left[e^{tX} \right]= \int_{0}^{\infty}e^{tx}f(x|\phi,\lambda,\alpha)dx %\\&= p \int_{0}^{\infty} e^{tx}f_1(x|\phi,\lambda,\alpha) dx+(1-p) \int_{0}^{\infty} e^{tx}f_2(x|\phi,\lambda,\alpha) dx 
= pM_{X,1}(t)+(1-p)M_{X,2}(t) \\& = \frac{\lambda}{(\lambda+\phi)}\sum_{r=0}^{\infty} \frac{t^r}{ r!}\frac{\Gamma(\frac{r}{\alpha}+\phi)}{\lambda^r\Gamma(\phi)}+\frac{\phi}{(\lambda+\phi)}\sum_{r=0}^{\infty} \frac{t^r}{ r!}\frac{\Gamma(\frac{r}{\alpha}+\phi+1)}{\lambda^r\Gamma(\phi+1)} \\& = \frac{1}{(\lambda+\phi)}\sum_{r=0}^{\infty} \frac{t^r}{ r!}\frac{\lambda\Gamma(\frac{r}{\alpha}+\phi)}{\lambda^r\Gamma(\phi)}+\frac{1}{(\lambda+\phi)}\sum_{r=0}^{\infty} \frac{t^r}{ r!}\frac{\left(\frac{r}{\alpha}+\phi\right)\Gamma(\frac{r}{\alpha}+\phi)}{\lambda^r\Gamma(\phi)} \\& = \sum_{r=0}^{\infty} \frac{t^r}{\lambda^r r!}\frac{\left(\frac{r}{\alpha}+\phi+\lambda\right)\Gamma(\frac{r}{\alpha}+\phi)}{(\lambda+\phi)\Gamma(\phi)} .
\end{aligned}
\end{equation*}

\end{proof}

\begin{corollary} For the random variable $T$ with $\f{GWL}$ distribution,
the r-th moment is given by
\begin{equation}\label{rmgwl}
\mu_r= E[T^r]=\frac{\left(\frac{r}{\alpha}+\phi+\lambda\right)\Gamma(\frac{r}{\alpha}+\phi)}{(\lambda+\phi)\lambda^r \Gamma(\phi)} . 
\end{equation}
\end{corollary}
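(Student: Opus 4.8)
The plan is to read the moments directly off the moment generating function established in the preceding theorem; this is exactly what makes the statement a corollary rather than an independent computation. Recall that whenever $M_X(t)=E[e^{tX}]$ exists in a neighbourhood of the origin, expanding the exponential as $e^{tX}=\sum_{r\ge 0}(tX)^r/r!$ and interchanging the expectation with the sum yields the Taylor identity
\begin{equation*}
M_X(t)=\sum_{r=0}^{\infty}\frac{t^r}{r!}\,E[T^r].
\end{equation*}
Hence $\mu_r=E[T^r]$ is recovered as $r!$ times the coefficient of $t^r$ in any power-series representation of $M_X(t)$, since a convergent power series determines its coefficients uniquely.

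First I would place side by side the two expansions of the same function: the generic one just displayed, $\sum_{r}\frac{t^r}{r!}\mu_r$, and the explicit series proved in Theorem 2.1,
\begin{equation*}
M_X(t)=\sum_{r=0}^{\infty}\frac{t^r}{\lambda^r r!}\,\frac{\left(\frac{r}{\alpha}+\phi+\lambda\right)\Gamma\!\left(\frac{r}{\alpha}+\phi\right)}{(\lambda+\phi)\Gamma(\phi)}.
\end{equation*}
Matching the coefficient of $t^r$ term by term and cancelling the common factor $1/r!$ isolates $\mu_r$ precisely in the claimed form, so the proof is essentially a one-line coefficient comparison.

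The only point requiring a word of care is the legitimacy of the coefficient matching, i.e.\ that $M_X(t)$ is genuinely analytic at $0$ so that the series expansion is unique; this follows from the convergence of the series in the theorem, and I expect it to be the only nontrivial (though still minor) obstacle. As an independent check — useful if one prefers not to lean on the MGF — I would instead compute $\mu_r$ from the two-component mixture $E[T^r]=pE[T_1^r]+(1-p)E[T_2^r]$ with $T_j\sim\f{GG}(\phi+j-1,\lambda,\alpha)$, using the generalized-gamma moment $E[T_j^r]=\Gamma(\frac{r}{\alpha}+\phi+j-1)/(\lambda^r\Gamma(\phi+j-1))$ together with $p=\lambda/(\lambda+\phi)$ and the recursions $\Gamma(\frac{r}{\alpha}+\phi+1)=(\frac{r}{\alpha}+\phi)\Gamma(\frac{r}{\alpha}+\phi)$ and $\Gamma(\phi+1)=\phi\Gamma(\phi)$; combining the two terms over the common denominator $\lambda^r(\lambda+\phi)\Gamma(\phi)$ reproduces the same expression and confirms the result.
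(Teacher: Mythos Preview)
Your argument is correct and follows essentially the same route as the paper: the paper's one-line proof invokes $\mu_r=M_X^{(r)}(0)$ applied to the series of Theorem~2.1, which is exactly your coefficient-matching read off. Your added mixture check via the generalized-gamma moments is sound extra confirmation, though the paper does not include it.
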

\begin{proof} From the literature $\mu_r=M_{X}^{(r)}(0)=\frac{d^n M_X(0)}{dt^n}$ and the result follows.
\end{proof}

\begin{corollary} For the random variable $T$ with $\f{GWL}$ distribution,
the r-th central moment is given by
\begin{equation}\label{rcmgwlp}
\begin{aligned} 
M_r&= E[T-\mu]^r= \sum_{i=0}^{r}\binom{r}{i}(-\mu)^{r-i}E[T^i] \\ &= \sum_{i=0}^{r}\binom{r}{i}\left(-\frac{\left(\frac{1}{\alpha}+\phi+\lambda \right)\Gamma\left(\frac{1}{\alpha}+\phi \right)}{\lambda(\lambda+\phi)\Gamma(\phi)}\right)^{r-i}\frac{\left(\frac{i}{\alpha}+\phi+\lambda\right)\Gamma(\frac{i}{\alpha}+\phi)}{(\lambda+\phi)\lambda^i \Gamma(\phi)} 
\end{aligned} 
\end{equation}
\end{corollary}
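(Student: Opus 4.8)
The plan is to treat this entirely as an algebraic consequence of the binomial theorem together with the $r$-th moment formula (\ref{rmgwl}) already established in the preceding corollary. The central moment is defined as $M_r = E[(T-\mu)^r]$, where $\mu = E[T]$ is the mean. First I would expand the integrand using the binomial theorem,
\begin{equation*}
(T-\mu)^r = \sum_{i=0}^{r}\binom{r}{i}(-\mu)^{r-i}T^{i},
\end{equation*}
and then apply the linearity of the expectation operator to obtain $M_r = \sum_{i=0}^{r}\binom{r}{i}(-\mu)^{r-i}E[T^i]$. This already yields the first line of the claimed identity (\ref{rcmgwlp}) and requires no property specific to the GWL distribution.

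The second step is to substitute the explicit moments. Each factor $E[T^i]$ is precisely the $i$-th raw moment $\mu_i$, so it is replaced directly by the closed form (\ref{rmgwl}) with $r$ set to $i$, giving $E[T^i] = \frac{(\frac{i}{\alpha}+\phi+\lambda)\Gamma(\frac{i}{\alpha}+\phi)}{(\lambda+\phi)\lambda^{i}\Gamma(\phi)}$. The only remaining ingredient is the mean $\mu$, which is itself the special case $r=1$ of (\ref{rmgwl}); evaluating that formula at $r=1$ produces $\mu = \frac{(\frac{1}{\alpha}+\phi+\lambda)\Gamma(\frac{1}{\alpha}+\phi)}{\lambda(\lambda+\phi)\Gamma(\phi)}$, which is exactly the expression raised to the power $r-i$ in the target formula. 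Inserting both substitutions into the binomial sum gives the second line of (\ref{rcmgwlp}) verbatim.

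There is no genuine obstacle here, since the result is a formal corollary of the $r$-th moment expression; the work is bookkeeping rather than analysis. The only point demanding a little care is ensuring that the index used for the mean (the fixed exponent $\frac{1}{\alpha}$ coming from $\mu = \mu_1$) is kept distinct from the summation index $i$ (which controls the exponent $\frac{i}{\alpha}$ in $E[T^i]$), so that the two instances of formula (\ref{rmgwl}) are evaluated at the correct arguments. Once this distinction is respected, the identity follows immediately and the corollary is proved.
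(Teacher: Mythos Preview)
Your proposal is correct and follows exactly the reasoning the paper intends: the corollary is stated in the paper without a separate proof, because the first displayed line of (\ref{rcmgwlp}) already encodes the binomial expansion plus linearity of expectation, and the second line is obtained by substituting (\ref{rmgwl}) for $E[T^i]$ and for $\mu=\mu_1$. Your write-up simply makes these implicit steps explicit, so there is nothing to add or correct.
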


\begin{corollary} A random variable $T$ with $\f{GWL}$ distribution,
has the mean and variance  given by
\begin{equation}\label{meangwl} 
\mu=\frac{\left(\frac{1}{\alpha}+\phi+\lambda \right)\Gamma\left(\frac{1}{\alpha}+\phi \right)}{\lambda(\lambda+\phi)\Gamma(\phi)}, 
\end{equation}
\begin{equation}\label{vargwl} 
 \sigma^2=\frac{\lambda(\lambda+\phi)\left(\frac{2}{\alpha}+\phi+\lambda \right)\Gamma\left(\frac{2}{\alpha}+\phi \right)-\left(\frac{1}{\alpha}+\phi+\lambda \right)^2\Gamma\left(\frac{1}{\alpha}+\phi \right)^2}{\lambda^2(\lambda+\phi)^2\Gamma(\phi)^2}.
\end{equation}
\end{corollary}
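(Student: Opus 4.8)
The plan is to obtain both quantities as direct specializations of the $r$-th moment formula (\ref{rmgwl}) established in the preceding corollary, together with the elementary relation between the variance and the first two moments. The mean is immediate: since $\mu = E[T] = \mu_1$, setting $r=1$ in (\ref{rmgwl}) gives
\begin{equation*}
\mu = \frac{\left(\frac{1}{\alpha}+\phi+\lambda\right)\Gamma\left(\frac{1}{\alpha}+\phi\right)}{(\lambda+\phi)\lambda\,\Gamma(\phi)},
\end{equation*}
which is exactly (\ref{meangwl}) and requires no further manipulation.

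For the variance I would use the standard identity $\sigma^2 = E[T^2]-(E[T])^2 = \mu_2 - \mu_1^2$, i.e.\ the $r=2$ instance of the central-moment expansion (\ref{rcmgwlp}). First I would set $r=2$ in (\ref{rmgwl}) to read off
\begin{equation*}
\mu_2 = \frac{\left(\frac{2}{\alpha}+\phi+\lambda\right)\Gamma\left(\frac{2}{\alpha}+\phi\right)}{(\lambda+\phi)\lambda^2\,\Gamma(\phi)},
\end{equation*}
and then subtract the square of the mean found above.

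The remaining work is purely algebraic: I would place $\mu_2 - \mu_1^2$ over the common denominator $\lambda^2(\lambda+\phi)^2\Gamma(\phi)^2$ by multiplying the $\mu_2$ term above and below by $(\lambda+\phi)\Gamma(\phi)$, noting that the $\mu_1^2$ term already carries this denominator, and then collect the two gamma factors in the numerator to reach (\ref{vargwl}). I anticipate no conceptual difficulty here; the only step demanding care is the bookkeeping of the surviving factors of $(\lambda+\phi)$ and $\Gamma(\phi)$ when clearing the denominator of $\mu_2$, since a slip in tracking these is the easiest way to land on an incorrect numerator.
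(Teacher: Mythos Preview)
Your approach is correct and mirrors the paper's own proof exactly: the paper likewise obtains $\mu$ by setting $r=1$ in (\ref{rmgwl}) and obtains $\sigma^2$ by taking $r=2$ in the central-moment expansion (\ref{rcmgwlp}), leaving the rest to ``some algebra.'' Your explicit description of placing $\mu_2-\mu_1^2$ over the common denominator $\lambda^2(\lambda+\phi)^2\Gamma(\phi)^2$ is precisely that algebra, so there is nothing to add.
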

\begin{proof} From (\ref{rmgwl}) and considering $r=1$ follows $\mu_1=\mu$. The second result follows from (\ref{rcmgwlp}) considering $r=2$ and with some algebra follow the results.\end{proof}

Different type of moments can be easily achieved for GWL distribution, one in particular, that has play a important role in information theory is given by
\begin{equation}\label{lrmgwl}
E[\log(T)]=\frac{\left(\psi(\phi)-\alpha\log\lambda+(\lambda+\phi)^{-1} \right)}{\alpha}  .
\end{equation}

\subsection{Survival Properties}

In this section, we present the survival, the hazard and mean residual life function for the GWL distribution. The survival function of $T\sim \f{GWL}(\phi,\lambda,\alpha)$ with the probability of an observation does not fail until a specified time $t$ is
\begin{equation}\label{fswl}
S(t|\phi,\lambda,\alpha) = \frac{\Gamma\left[\phi,(\lambda t)^{\alpha}\right](\lambda+\phi)+(\lambda t)^{\alpha\phi}e^{-(\lambda t)^\alpha}}{(\lambda+\phi)\Gamma(\phi)}\ 
\end{equation}
where $\Gamma(x,y)=\int_{0}^{x}{w^{y-1}e^{-x}dw}$ is called upper incomplete gamma. The hazard function quantify the instantaneous risk of failure at a given time  $t$ and is given by
\begin{equation}\label{fhwl} 
h(t|\phi,\lambda,\alpha)=\frac{f(t|\phi,\lambda,\alpha)}{S(t|\phi,\lambda,\alpha)}=\frac{\alpha\lambda^{\alpha\phi}t^{\alpha\phi-1}(\lambda+(\lambda t)^{\alpha})e^{-(\lambda t)^\alpha}}{\Gamma\left[\phi,(\lambda t)^{\alpha}\right](\lambda+\phi)+(\lambda t)^{\alpha\phi}e^{-(\lambda t)^\alpha}}. 
\end{equation}

The behaviours of the hazard function (\ref{fhwl}) when $t\rightarrow0$ and $t\rightarrow\infty$, respectively, are given by
\begin{equation*}
h(0)=
\begin{cases}
 \infty, & \text{if }\alpha\phi<1 \\
 \dfrac{\alpha\lambda^2}{(\lambda+\phi)\Gamma(\phi)}, & \text{if }\alpha\phi=1 \\
 0, & \text{if }\alpha\phi>1 
\end{cases}
\ \ \ \mbox{and}  \ \ \ h(\infty)=
\begin{cases}
 0, & \text{if }\alpha\phi<1 \\
 \lambda, & \text{if }\alpha\phi=1 \\
 \infty, & \text{if }\alpha\phi>1. 
\end{cases} \ \ 
\end{equation*}

\begin{theorem}The hazard rate function $h(t)$ of the generalized weighted Lindley distribution is increasing, decreasing, bathtub, unimodal or decreasing-increasing-decreasing shaped.
\end{theorem}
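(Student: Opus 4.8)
The plan is to apply Glaser's theorem (Glaser, 1980) together with its extension to functions with several turning points. Set $\eta(t)=-f'(t)/f(t)=-\tfrac{d}{dt}\log f(t)$, and recall that Glaser's result links the monotonicity pattern of $\eta$ to the shape of $h$: if $\eta$ is monotone, $h$ is increasing or decreasing accordingly; if $\eta$ is bathtub-shaped (one interior minimum), $h$ is either increasing or bathtub; if $\eta$ is unimodal (one interior maximum), $h$ is either decreasing or unimodal; and when $\eta$ has two interior turning points the hazard may in addition assume the decreasing-increasing-decreasing form. The selection among the admissible shapes in each case is then made using the boundary values $h(0)$ and $h(\infty)$, which have already been recorded above as functions of the sign of $\alpha\phi-1$.

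First I would compute $\eta$ from (\ref{fdpgwl}). Differentiating $\log f(t)=\text{const}+(\alpha\phi-1)\log t+\log\!\big(\lambda+(\lambda t)^\alpha\big)-(\lambda t)^\alpha$ gives
\[
\eta(t)=\frac{1-\alpha\phi}{t}+\alpha\lambda^\alpha t^{\alpha-1}\left(1-\frac{1}{\lambda+(\lambda t)^\alpha}\right).
\]
Differentiating once more is the key simplification: writing $B(t)=\lambda+(\lambda t)^\alpha$, one has $B'(t)=\alpha\lambda^\alpha t^{\alpha-1}$, so the terms collapse, and after clearing the strictly positive factor $t^2 B(t)^2$ the sign of $\eta'(t)$ equals the sign of a function which, under the increasing change of variable $w=(\lambda t)^\alpha\in(0,\infty)$, becomes the cubic
\[
\Phi(w)=(\alpha\phi-1)(\lambda+w)^2+\alpha(\alpha-1)\,w\,(\lambda-1+w)(\lambda+w)+\alpha^2 w^2,
\]
with leading coefficient $\alpha(\alpha-1)$ and constant term $(\alpha\phi-1)\lambda^2$.

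The core of the argument is a sign analysis of $\Phi$ on $(0,\infty)$. Since $t\mapsto(\lambda t)^\alpha$ is a strictly increasing bijection of $(0,\infty)$ onto itself, the number and order of the sign changes of $\eta'$ are exactly those of $\Phi$. I would split into $\alpha>1$, $\alpha=1$ (where $\Phi$ degenerates to a quadratic and recovers the known weighted Lindley dichotomy: $\Phi\ge0$ for $\phi\ge1$ and a single sign change for $0<\phi<1$) and $\alpha<1$, and within each region use the signs of the leading and constant coefficients together with Descartes' rule of signs to bound the number of positive roots of $\Phi$ by $0$, $1$, or $2$. Each count fixes a monotonicity pattern for $\eta$; feeding this into Glaser's theorem and its two-turning-point extension, and pinning down the surviving possibility by the boundary values $h(0),h(\infty)$ above, yields precisely the five asserted shapes. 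The main obstacle is the exhaustive case analysis of $\Phi$: one must verify that in every parameter region the number of positive roots is at most two and that the resulting sign pattern is compatible with the prescribed boundary behaviour, in particular isolating the subregion in which $\Phi$ exhibits two sign changes, which is the one producing the decreasing-increasing-decreasing hazard.
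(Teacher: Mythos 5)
Your reduction is set up correctly: the formula for $\eta(t)=-f'(t)/f(t)$ is right, and multiplying $\eta'(t)$ by $t^{2}\bigl(\lambda+(\lambda t)^{\alpha}\bigr)^{2}$ and substituting $w=(\lambda t)^{\alpha}$ does produce your cubic $\Phi(w)$, so the turning points of $\eta$ correspond exactly to the positive roots of $\Phi$. Be aware, though, that this is a genuinely different and far more ambitious route than the paper's. The paper does not attempt a classification over the whole parameter space: it only shows that each of the five shapes is \emph{attained}, by restricting to subfamilies where the answer is already known ($\alpha=1$ reduces to the weighted Lindley results of Ghitany et al.\ (2008); $\phi=1$ reduces to the power Lindley results of Ghitany et al.\ (2013), which is where the decreasing-increasing-decreasing case comes from; and $\alpha=2$, $\lambda=1$ is handled by a direct application of Glaser's theorem). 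Read as an existence statement, the theorem is proved by that short argument, and your cubic analysis is not needed for it.

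The gap is that the core of your argument -- the sign analysis of $\Phi$ -- is not carried out, and the one quantitative claim you do make, namely that Descartes' rule bounds the number of positive roots of $\Phi$ by two, is not justified. Expanding $\Phi$ gives coefficients $a_{3}=\alpha(\alpha-1)$, $a_{2}=(\alpha\phi-1)+\alpha(\alpha-1)(2\lambda-1)+\alpha^{2}$, $a_{1}=\lambda\bigl[2(\alpha\phi-1)+\alpha(\alpha-1)(\lambda-1)\bigr]$ and $a_{0}=(\alpha\phi-1)\lambda^{2}$; for $\alpha=0.9$, $\phi=1.2$, $\lambda=4$ the sign pattern is $(-,+,-,+)$, so Descartes permits three positive roots. (In that particular instance $\Phi'(w)=-0.27w^{2}+0.52w-0.44$ has negative discriminant, so $\Phi$ is monotone and there is in fact only one root -- but establishing "at most two" in general requires an analysis of $\Phi'$ or of the discriminant, not the coefficient signs alone.) A three-root configuration would give $\eta$ three turning points and a hazard shape outside your list, so it must be excluded, and nothing in the proposal does so. Two further points: Glaser (1980) covers only monotone and single-turning-point $\eta$, so the "two-turning-point extension" you invoke for the decreasing-increasing-decreasing case must be stated and justified (Ghitany et al.\ (2013) handle that case for the power Lindley subfamily by a bespoke argument, not by citing a general theorem); and the exhaustive case analysis you defer to as "the main obstacle" is precisely the content of the theorem under your reading. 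As it stands the proposal is a plausible program, not a proof.
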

\begin{proof} Is not straightforward to apply the theorem proposed by Glaser (1980) in the GLW distribution. Moreover, since the hazard rate function  (\ref{fhwl}) is complex, we consider the following cases:

\begin{enumerate}
\item Let $\alpha=1$, then GWL distribution reduces to the WL distribution. In this case, Ghitany et al. (2008) proved that the hazard function is bathtub shaped (increasing) if $0<\phi<1$ $(\phi>0)$, for all $\lambda>0$.

\item Let $\phi=1$, then GWL distribution reduces to the PL distribution. In this case, considering $\beta=\lambda^\alpha$, Ghitany et al. (2013) proved that the hazard function is 
\begin{itemize}
\item increasing if $\left\{0<\alpha\geq 1,\beta>0 \right\}$;

\item decreasing if $\left\{0<\alpha\leq\frac{1}{2}, \beta>0 \right\}$ or $\left\{\frac{1}{2}<\alpha<1, \beta\geq {(2\alpha-1)}^2{(4\alpha(1-\alpha))}^{-1} \right\}$;

\item decreasing-increasing-decreasing if $\left\{\frac{1}{2}<\alpha<1, 0<\beta< {(2\alpha-1)}^2{(4\alpha(1-\alpha))}^{-1} \right\}$.

\end{itemize}
\vspace{-0.5cm}
\item  Let $\alpha=2$ and $\lambda=1$, from Glaser’s theorem (1980), we conclude straightforwardly that the hazard rate function is decreasing shaped (unimodal) if $0<\phi<1$ $(\phi>1)$.
\end{enumerate}
\vspace{-0.6cm}
\end{proof}

These properties make the GWL distribution a  flexible model for reliability data. Figure \ref{friswl} gives examples from the shapes of the hazard function for different values of $\phi, \lambda$ and $\alpha$.
%\vspace{-0.8cm}
\begin{figure}[!htb]
\centering
\includegraphics[scale=0.53]{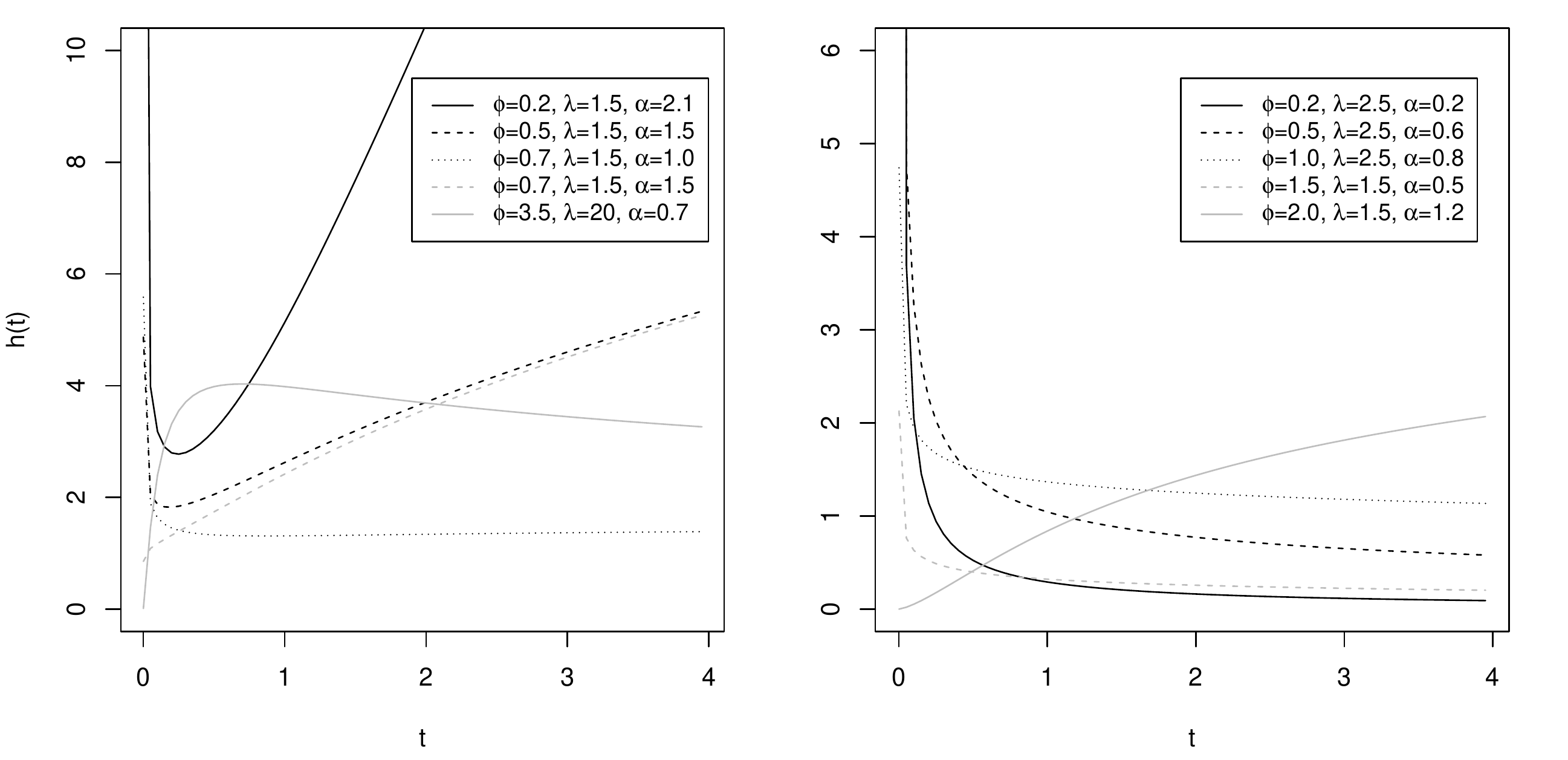}
\caption{Hazard function shapes for GWL distribution and considering different values of $\phi, \lambda$ and $\alpha$}\label{friswl}
\end{figure}

The mean residual life (MRL) has been used widely in survival analysis and represents the expected additional lifetime given that a component has survived until time t, the following result presents the MRL function of the $\f{GWL}$ distribution 

\begin{proposition} The mean residual life function $r(t|\phi,\lambda,\alpha)$ of the $\f{GWL}$ distribution is given by
\begin{equation}\label{mr1wl}
\begin{aligned} 
r(t|\phi,\lambda,\alpha)&=\frac{\left(\phi+\frac{1}{\alpha}+\lambda\right)\Gamma\left(\phi+\frac{1}{\alpha},\left(\lambda t\right)^\alpha\right)-\lambda t(\lambda+\phi)\Gamma\left(\phi,\left(\lambda t\right)^\alpha\right)}{\lambda[(\lambda+\phi)\Gamma(\phi,\left(\lambda t\right)^\alpha)+(\lambda t)^{\alpha\phi}e^{-\left(\lambda t\right)^\alpha}]}.
\end{aligned} 
\end{equation}
\end{proposition}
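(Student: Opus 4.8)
The plan is to start from the standard integral representation of the mean residual life,
\[
r(t|\phi,\lambda,\alpha) = \frac{1}{S(t)}\int_t^\infty (x-t)\,f(x|\phi,\lambda,\alpha)\,dx = \frac{1}{S(t)}\int_t^\infty x\,f(x|\phi,\lambda,\alpha)\,dx - t,
\]
since the survival function $S(t)$ is already available in closed form from (\ref{fswl}). Thus the whole problem reduces to evaluating the single incomplete moment $I(t)=\int_t^\infty x\,f(x|\phi,\lambda,\alpha)\,dx$ and then assembling $r(t)=I(t)/S(t)-t$.

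To compute $I(t)$ I would exploit the two-component mixture representation from the start of Section~2, writing $f=pf_1+(1-p)f_2$ with $p=\lambda/(\lambda+\phi)$ and $f_j\sim\f{GG}(\phi+j-1,\lambda,\alpha)$. For a single generalized gamma density $\f{GG}(a,\lambda,\alpha)$ the substitution $u=(\lambda x)^\alpha$ turns $\int_t^\infty x f(x)\,dx$ into a multiple of $\int_{(\lambda t)^\alpha}^\infty u^{a+1/\alpha-1}e^{-u}\,du$, that is into $\Gamma(a+1/\alpha,(\lambda t)^\alpha)/(\lambda\Gamma(a))$, where $\Gamma(\cdot,\cdot)$ is the upper incomplete gamma. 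Applying this with $a=\phi$ and $a=\phi+1$, weighting by $p$ and $1-p$, and using $\Gamma(\phi+1)=\phi\Gamma(\phi)$ to clear the factorial factor, expresses $I(t)$ as a linear combination of $\Gamma(\phi+1/\alpha,(\lambda t)^\alpha)$ and $\Gamma(\phi+1+1/\alpha,(\lambda t)^\alpha)$ over $(\lambda+\phi)\Gamma(\phi)$.

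The key reduction step is then the recurrence $\Gamma(s+1,x)=s\,\Gamma(s,x)+x^s e^{-x}$ applied with $s=\phi+1/\alpha$ and $x=(\lambda t)^\alpha$; here one uses $(\lambda t)^{\alpha(\phi+1/\alpha)}=(\lambda t)\,(\lambda t)^{\alpha\phi}$, so that the boundary term is exactly $\lambda t\,(\lambda t)^{\alpha\phi}e^{-(\lambda t)^\alpha}$. This collapses the two incomplete gamma terms into the single coefficient $(\lambda+\phi+1/\alpha)$ multiplying $\Gamma(\phi+1/\alpha,(\lambda t)^\alpha)$, plus that boundary term. Finally I would substitute the closed form of $S(t)$, observing that $(\lambda+\phi)\Gamma(\phi)\,S(t)=(\lambda+\phi)\Gamma(\phi,(\lambda t)^\alpha)+(\lambda t)^{\alpha\phi}e^{-(\lambda t)^\alpha}$, and subtract $tS(t)$. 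The term $\lambda t\,(\lambda t)^{\alpha\phi}e^{-(\lambda t)^\alpha}$ produced by the recurrence cancels precisely against the matching term coming from $-tS(t)$, and after clearing the common factor $\lambda(\lambda+\phi)\Gamma(\phi)$ the stated formula (\ref{mr1wl}) emerges.

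I expect the main obstacle to be bookkeeping rather than anything conceptual: keeping the powers of $\lambda$ straight through the substitution, correctly tracking the shape shift $\phi\mapsto\phi+1/\alpha$ induced by the extra factor of $x$ in the incomplete moment, and verifying that the exponential boundary terms cancel so that no $(\lambda t)^{\alpha\phi}e^{-(\lambda t)^\alpha}$ survives in the numerator. A useful sanity check throughout is the limit $t\to 0$, where $r(0)$ must reproduce the mean $\mu$ in (\ref{meangwl}).
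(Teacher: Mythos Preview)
Your plan is correct and follows essentially the same route as the paper: start from $r(t)=S(t)^{-1}\int_t^\infty y f(y)\,dy - t$, split $f$ via the two-component generalized-gamma mixture, and evaluate the two truncated first moments. The paper's proof presents exactly this decomposition and then jumps straight to the final expression; your proposal simply makes explicit the intermediate mechanics the paper suppresses --- the substitution $u=(\lambda x)^\alpha$ giving $\int_t^\infty x f_j(x)\,dx=\Gamma(a+1/\alpha,(\lambda t)^\alpha)/(\lambda\Gamma(a))$, the recurrence $\Gamma(s+1,x)=s\Gamma(s,x)+x^s e^{-x}$, and the cancellation of the $\lambda t(\lambda t)^{\alpha\phi}e^{-(\lambda t)^\alpha}$ term against $-tS(t)$ --- all of which check out.
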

\begin{proof} Note that
\begin{equation*}
\begin{aligned} 
r(t|\phi,\lambda,\alpha)&= \frac{1}{S(t)}\int_{t}^{\infty}yf(y|\lambda,\phi)dy-t \\ &=\frac{1}{S(t)}\left[p\int_{t}^{\infty}yf_1(y|\lambda,\phi)dy+(1-p)\int_{x}^{\infty}yf_2(y|\lambda,\phi)dy\right]-t \\& =\frac{\left(\phi+\frac{1}{\alpha}+\lambda\right)\Gamma\left(\phi+\frac{1}{\alpha},\left(\lambda t\right)^\alpha\right)-\lambda t(\lambda+\phi)\Gamma\left(\phi,\left(\lambda t\right)^\alpha\right)}{\lambda[(\lambda+\phi)\Gamma(\phi,\left(\lambda t\right)^\alpha)+(\lambda t)^{\alpha\phi}e^{-\left(\lambda t\right)^\alpha}]}.
\end{aligned} 
\end{equation*}
\end{proof}
The behaviors of the mean residual life function when $t\rightarrow0$ and $t\rightarrow\infty$, respectively, are given by
\begin{equation*}
r(0)=\frac{1}{\lambda\left((\lambda+\phi)\Gamma(\phi)\right)} \ \ \ \mbox{and} \ \ \ r(\infty)
\begin{cases}
 \infty, & \text{if }\alpha<1 \\
 \dfrac{1}{\lambda}, & \text{if }\alpha=1 \\
 0, & \text{if }\alpha>1 
\end{cases}
.\ \ \ 
\end{equation*}
%\noindent\textbf{Remarks.}

%\begin{equation*}\label{mrl3}
%A
%\end{equation*}

%\newpage

\subsection{Entropy}

In information theory, entropy has played a central role as a measure of the uncertainty associated with a random variable. Proposed by Shannon (1948), Shannon's entropy is one of the most important metrics in information theory. Shannon's entropy for the GWL distribution can be obtained by solving the following equation
\begin{equation}\label{she1} 
H_S(\phi,\lambda,\alpha)=-\int_{0}^{\infty}\log\left(\frac{\alpha\lambda^{\alpha\phi}t^{\alpha\phi-1}(\lambda+(\lambda t)^{\alpha})e^{-(\lambda t)^\alpha}}{(\lambda+\phi)\Gamma(\phi)}\right)f(t|\phi,\lambda,\alpha)dt.
\end{equation}	

\begin{proposition} A random variable $T$ with $\f{GWL}$ distribution,
has Shannon's Entropy  given by
\begin{equation}\label{shegwl} 
\begin{aligned}
H_S(\phi,\lambda,\alpha)=& \ \log(\lambda+\phi)+\log\Gamma(\phi)-\log\alpha-\log\lambda-\frac{\phi(1+\phi+\lambda)}{(\lambda+\phi)} \\ & -\frac{\psi(\phi)(\alpha\phi-1)}{\alpha}-\frac{(\alpha\phi-1)}{\alpha(\lambda+\phi)}-\frac{\eta(\phi,\lambda)}{(\lambda+\phi)\Gamma(\phi)}.
\end{aligned} 
\end{equation}
where
\begin{equation*}
\eta(\phi,\lambda)=\int_{0}^{\infty}(\lambda+y)\log(\lambda+y)y^{\phi-1}e^{-y}dy=\int_{0}^{1}(\lambda-\log u)\log(\lambda-\log u)(-\log u)^{\phi-1}du .
\end{equation*}

\begin{proof} From the equation (\ref{she1}) we have
\begin{equation}\label{shep1} 
\begin{aligned}
H_S(\phi,\lambda,\alpha)=& \ -\log\alpha-\alpha\phi\log\lambda+\log(\lambda+\phi)+\log(\Gamma(\phi))+\lambda^\alpha E[T^\alpha] \\&-(\alpha\phi-1)E[\log{T}] - E\left[\log(\lambda+(\lambda T)^{\alpha}) \right]
\end{aligned}
\end{equation}

Note that
\begin{equation*}\label{shep2}
E\left[\log(\lambda+(\lambda T)^{\alpha}) \right]=\int_{0}^{\infty}	\log(\lambda+(\lambda T)^{\alpha}\frac{\alpha\lambda^{\alpha\phi}t^{\alpha\phi-1}(\lambda+(\lambda t)^{\alpha})e^{-(\lambda t)^\alpha}}{(\lambda+\phi)\Gamma(\phi)}dt,
\end{equation*}
using the change of variable $y=(\lambda t)^{\alpha}$ and after some algebra 
\begin{equation*}\label{shep22}
\begin{aligned}
E\left[\log(\lambda+(\lambda T)^{\alpha}) \right]&=\frac{1}{(\lambda+\phi)\Gamma(\phi)}\int_{0}^{\infty}(\lambda+y)\log(\lambda+y)y^{\phi-1}e^{-y}dy \\&= \frac{\eta(\phi,\lambda)}{(\lambda+\phi)\Gamma(\phi)}.
\end{aligned}
\end{equation*}

Through equations (\ref{rmgwl}) and (\ref{lrmgwl}), we can easily find the solution of $E[T^\alpha]$ and $E[\log{T}]$ and the result follows.
\end{proof}
\end{proposition}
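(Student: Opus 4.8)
The plan is to work directly from the definition $H_S=-E[\log f(T)]$ in (\ref{she1}) and to exploit the multiplicative structure of the density (\ref{fdpgwl}). First I would take the logarithm of $f(t)$, splitting it into four additive pieces: the constant block $\log\alpha+\alpha\phi\log\lambda-\log(\lambda+\phi)-\log\Gamma(\phi)$, the logarithmic term $(\alpha\phi-1)\log t$, the ``mixing'' term $\log(\lambda+(\lambda t)^{\alpha})$, and the decay term $-(\lambda t)^{\alpha}$. Applying $-E[\cdot]$ term by term and using $E[1]=1$ for the constants, the computation reduces to evaluating just three expectations, $E[T^{\alpha}]$, $E[\log T]$, and $E[\log(\lambda+(\lambda T)^{\alpha})]$. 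This is exactly the intermediate identity (\ref{shep1}), so establishing that identity is the first milestone.

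The first two expectations are immediate from earlier results. For $E[T^{\alpha}]$ I would set $r=\alpha$ in the $r$-th moment formula (\ref{rmgwl}) and simplify via $\Gamma(\phi+1)=\phi\Gamma(\phi)$, so that $\lambda^{\alpha}E[T^{\alpha}]$ collapses to the rational expression $\phi(1+\phi+\lambda)/(\lambda+\phi)$. For $E[\log T]$ I would substitute (\ref{lrmgwl}) directly. The delicate bookkeeping step here is tracking the two contributions proportional to $\log\lambda$: the $-\alpha\phi\log\lambda$ coming from the constant block and the $+(\alpha\phi-1)\log\lambda$ emerging from $-(\alpha\phi-1)E[\log T]$ cancel down to a single $-\log\lambda$, which is precisely how the stated formula (\ref{shegwl}) acquires its clean $-\log\lambda$ term. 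The remaining pieces of $E[\log T]$ then furnish the summands $-\psi(\phi)(\alpha\phi-1)/\alpha$ and $-(\alpha\phi-1)/(\alpha(\lambda+\phi))$.

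The main obstacle is the last expectation, $E[\log(\lambda+(\lambda T)^{\alpha})]$, since the factor $\log(\lambda+(\lambda t)^{\alpha})$ does not combine with the rest of the density into anything elementary. Here the decisive move is the change of variable $y=(\lambda t)^{\alpha}$, under which the GWL kernel turns into a gamma-type weight and the whole integral reduces to $\eta(\phi,\lambda)/\big((\lambda+\phi)\Gamma(\phi)\big)$, where $\eta(\phi,\lambda)=\int_0^{\infty}(\lambda+y)\log(\lambda+y)y^{\phi-1}e^{-y}\,dy$. I do not expect $\eta$ to admit a closed form in elementary functions, so I would simply leave it as the displayed one-dimensional integral, optionally recording the equivalent $u=e^{-y}$ form on $(0,1)$ for numerical use. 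Collecting the constant block, the two elementary expectations, and this $\eta$-term then yields (\ref{shegwl}); I would finish by re-checking the sign and coefficient of each contribution, since the cancellations in the $\log\lambda$ and $\psi(\phi)$ pieces are exactly where an arithmetic slip is most likely to occur.
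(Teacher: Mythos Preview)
Your proposal is correct and follows essentially the same route as the paper: decompose $\log f$ into the constant block, the $(\alpha\phi-1)\log t$ term, the mixing term, and the exponential term to obtain the intermediate identity (\ref{shep1}); handle $E[\log(\lambda+(\lambda T)^{\alpha})]$ by the substitution $y=(\lambda t)^{\alpha}$ to produce $\eta(\phi,\lambda)/((\lambda+\phi)\Gamma(\phi))$; and read off $E[T^{\alpha}]$ and $E[\log T]$ from (\ref{rmgwl}) and (\ref{lrmgwl}). In fact you supply more detail than the paper does, including the explicit simplification $\lambda^{\alpha}E[T^{\alpha}]=\phi(1+\phi+\lambda)/(\lambda+\phi)$ and the tracking of the $\log\lambda$ cancellation, so your planned sign-check at the end is well placed.
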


Other popular entropy measure is proposed by Renyi (1961). Some recent applications of the Reenyi entropy can be seen in  Popescu \& Aiordachioaie (2013). If $T$ has the probability density function (1) then Renyi entropy is defined by
\begin{equation}
\frac{1}{1-\rho}\log\int_{0}^{\infty}f^\rho(x)dx .
\end{equation}
\begin{proposition} A random variable $T$ with $\f{GWL}$ distribution,
has the Renyi entropy given by
\begin{equation}\label{renyigwl} 
H_R(\rho)= \frac{(\rho-1)(\log\alpha+\log\lambda)-\rho\left(\log(\lambda+\phi)+\log\Gamma(\phi)\right)-\log(\delta(\rho,\phi,\lambda,\alpha))}{1-\rho}
\end{equation}
where $\delta(\rho,\phi,\lambda,\alpha)=\int_{0}^{\infty}y^{\tfrac{\rho\phi-\rho+1-\alpha}{\alpha}}(\lambda+y)^\rho e^{-\rho y}dy$.
\begin{proof}
The Renyi entropy is given by
\begin{equation*}
\begin{aligned}
H_R(\rho)&=\frac{1}{1-\rho}\log\left(\frac{\alpha^\rho\lambda^\rho}{(\lambda+\phi)^\rho\Gamma(\phi)^\rho}\int_{0}^{\infty}(\lambda t)^{\alpha\rho\left(\phi-\tfrac{1}{\alpha}\right)}\left(\lambda+(\lambda t)^\alpha \right)^\rho e^{-\rho(\lambda t)^\alpha}dt \right) \\&=\frac{1}{1-\rho}\log\left(\frac{\alpha^\rho\lambda^\rho}{(\lambda+\phi)^\rho\Gamma(\phi)^\rho}\int_{0}^{\infty}y^{\tfrac{\rho\phi-\rho+1-\alpha}{\alpha}}(\lambda+y)^\rho e^{-\rho y}dy \right)\\&=\frac{1}{1-\rho}\log\left(\frac{\alpha^\rho\lambda^\rho}{(\lambda+\phi)^\rho\Gamma(\phi)^\rho}\delta(\rho,\phi,\lambda,\alpha) \right)
\end{aligned}
\end{equation*}
and with some algebra the proof is completed.
\end{proof}
\end{proposition}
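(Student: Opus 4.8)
The plan is to reduce the defining integral $\int_{0}^{\infty}f^{\rho}(x)\,dx$ to the single integral $\delta(\rho,\phi,\lambda,\alpha)$ purely by algebra together with one change of variables; no integral need be evaluated in closed form, since $\delta$ itself is left unresolved. First I would raise the density (\ref{fdpgwl}) to the power $\rho$, obtaining
\[
f(t)^{\rho}=\frac{\alpha^{\rho}\lambda^{\rho\alpha\phi}}{(\lambda+\phi)^{\rho}\Gamma(\phi)^{\rho}}\,t^{\rho(\alpha\phi-1)}\bigl(\lambda+(\lambda t)^{\alpha}\bigr)^{\rho}e^{-\rho(\lambda t)^{\alpha}},
\]
and pull the constant prefactor outside the integral. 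To set up the substitution cleanly I would rewrite the free monomial as $t^{\rho(\alpha\phi-1)}=\lambda^{-\rho(\alpha\phi-1)}(\lambda t)^{\rho(\alpha\phi-1)}$, so that every $t$-dependent factor is expressed through the single quantity $\lambda t$.

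Next I would substitute
\[
y=(\lambda t)^{\alpha},\qquad t=\lambda^{-1}y^{1/\alpha},\qquad dt=\tfrac{1}{\alpha\lambda}\,y^{1/\alpha-1}\,dy .
\]
Under this change $(\lambda t)^{\rho(\alpha\phi-1)}=y^{\rho(\alpha\phi-1)/\alpha}$, the bracket turns into $(\lambda+y)^{\rho}$, and the exponential becomes $e^{-\rho y}$. Combining the power of $y$ from the monomial with the $y^{1/\alpha-1}$ coming from the Jacobian produces the exponent $\bigl(\rho(\alpha\phi-1)+1-\alpha\bigr)/\alpha$, which is exactly the exponent of $y$ in the integrand of $\delta$. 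Thus the remaining integral is precisely $\delta(\rho,\phi,\lambda,\alpha)$.

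It then remains to collect the constants. Multiplying the prefactor $\alpha^{\rho}\lambda^{\rho\alpha\phi}/\bigl((\lambda+\phi)^{\rho}\Gamma(\phi)^{\rho}\bigr)$ by the factors $\lambda^{-\rho(\alpha\phi-1)}$ and $\tfrac{1}{\alpha\lambda}$ generated above, the powers of $\lambda$ telescope to $\lambda^{\rho-1}$ and the powers of $\alpha$ to $\alpha^{\rho-1}$, so that
\[
\int_{0}^{\infty}f^{\rho}(t)\,dt=\frac{\alpha^{\rho-1}\lambda^{\rho-1}}{(\lambda+\phi)^{\rho}\Gamma(\phi)^{\rho}}\,\delta(\rho,\phi,\lambda,\alpha).
\]
Applying $\tfrac{1}{1-\rho}\log(\cdot)$ and distributing the logarithm over this product yields the three groups of terms $(\rho-1)(\log\alpha+\log\lambda)$, $-\rho\bigl(\log(\lambda+\phi)+\log\Gamma(\phi)\bigr)$ and the contribution of $\log\delta$, which assemble into the asserted expression for $H_{R}(\rho)$.

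I expect the only genuine obstacle to be this constant- and exponent-tracking. It is easy to drop the Jacobian factor $\tfrac{1}{\alpha\lambda}$ or to mishandle the $\lambda^{\rho\alpha\phi}$ inherited from the density, and both the exponent of $y$ inside $\delta$ and the external powers $\alpha^{\rho-1}$, $\lambda^{\rho-1}$ hinge on performing these cancellations correctly; everything else is routine.
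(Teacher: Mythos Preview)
Your approach is exactly the paper's: raise $f$ to the $\rho$th power, substitute $y=(\lambda t)^{\alpha}$, and identify the remaining integral as $\delta$. Your bookkeeping is in fact more careful than the paper's own displayed steps---you correctly pick up the Jacobian $1/(\alpha\lambda)$ and obtain the prefactor $\alpha^{\rho-1}\lambda^{\rho-1}$, which is what produces the $(\rho-1)(\log\alpha+\log\lambda)$ in the final formula. One caution: the exponent you compute, $(\rho(\alpha\phi-1)+1-\alpha)/\alpha=(\rho\alpha\phi-\rho+1-\alpha)/\alpha$, is \emph{not} literally the exponent $(\rho\phi-\rho+1-\alpha)/\alpha$ printed in the definition of $\delta$, and your computation gives $+\log\delta$ rather than the printed $-\log\delta$ in the numerator; both are typos in the statement, and your values are the correct ones, so just flag them rather than silently asserting agreement.
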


\subsection{Lorenz curves}

The Lorenz curve (see Bonferroni, 1930) are well-known measures used in reliability, income inequality, life testing and renewal theory. The Lorenz curve for a non-negative T random variable is given through the consecutive plot of

\begin{equation*}
L\left(F(t)\right)=\frac{\int_{0}^{t}xf(x)dx}{\int_{0}^{\infty}xf(x)dx}=\frac{1}{\mu}\int_{0}^{t}xf(x)dx.
\end{equation*}

\begin{proposition}
The Lorenz curve of the GWL distribution is
\begin{equation*}
L\left(p\right)=\frac{\gamma\left(\phi+1+\frac{1}{\alpha},\left(\lambda F^{-1}(p)\right)^\alpha\right)+\lambda\gamma\left(\phi+\frac{1}{\alpha},\left(\lambda F^{-1}(p)\right)^\alpha\right)}{\left(\frac{1}{\alpha}+\phi+\lambda\right)\Gamma\left[\frac{1}{\alpha}+\phi\right]}
\end{equation*}
or
\begin{equation*}
L\left(p\right)=\frac{\left(\frac{1}{\alpha}+\phi+\lambda\right)\gamma\left(\phi+\frac{1}{\alpha},\left(\lambda F^{-1}(p)\right)^\alpha\right)-\left(\lambda F^{-1}(p)\right)^{\alpha\phi-1}e^{-\left(\lambda F^{-1}(p)\right)^\alpha}}{\left(\frac{1}{\alpha}+\phi+\lambda\right)\Gamma\left[\frac{1}{\alpha}+\phi\right]}
\end{equation*}
where $F^{-1}(p)=t_p$.
\end{proposition}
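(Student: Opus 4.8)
The plan is to work directly from the definition $L(F(t))=\frac{1}{\mu}\int_0^t x f(x)\,dx$ and to exploit the two-component mixture representation $f=pf_1+(1-p)f_2$ stated at the beginning of Section~2, with $p=\lambda/(\lambda+\phi)$ and $f_j$ the $\f{GG}(\phi+j-1,\lambda,\alpha)$ density in (\ref{fdpgg}). This reduces the computation to evaluating the truncated first moment of a generalized gamma law, which is available in closed form, and then reassembling the two pieces and normalizing by the mean $\mu$ from (\ref{meangwl}).

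First I would compute, for a single generalized gamma component,
\begin{equation*}
\int_0^t x f_j(x)\,dx=\frac{\alpha\lambda^{\alpha(\phi+j-1)}}{\Gamma(\phi+j-1)}\int_0^t x^{\alpha(\phi+j-1)}e^{-(\lambda x)^\alpha}\,dx .
\end{equation*}
The substitution $y=(\lambda x)^\alpha$, so that $x=y^{1/\alpha}/\lambda$ and $dy=\alpha\lambda^\alpha x^{\alpha-1}\,dx$, turns the integrand into $y^{(\phi+j-1)+\frac{1}{\alpha}-1}e^{-y}$ up to constants, and yields
\begin{equation*}
\int_0^t x f_j(x)\,dx=\frac{\gamma\!\left(\phi+j-1+\tfrac{1}{\alpha},\,(\lambda t)^\alpha\right)}{\lambda\,\Gamma(\phi+j-1)} .
\end{equation*}

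Next I would insert the mixture weights $p=\lambda/(\lambda+\phi)$ and $1-p=\phi/(\lambda+\phi)$, using $\Gamma(\phi+1)=\phi\Gamma(\phi)$ so that the $j=2$ factor $\phi/\Gamma(\phi+1)$ collapses to $1/\Gamma(\phi)$. This gives
\begin{equation*}
\int_0^t x f(x)\,dx=\frac{\lambda\,\gamma\!\left(\phi+\tfrac{1}{\alpha},(\lambda t)^\alpha\right)+\gamma\!\left(\phi+1+\tfrac{1}{\alpha},(\lambda t)^\alpha\right)}{\lambda(\lambda+\phi)\Gamma(\phi)} .
\end{equation*}
Dividing by $\mu$ from (\ref{meangwl}), whose prefactor $\left[\lambda(\lambda+\phi)\Gamma(\phi)\right]^{-1}$ cancels exactly, leaves $\left(\tfrac{1}{\alpha}+\phi+\lambda\right)\Gamma\!\left(\tfrac{1}{\alpha}+\phi\right)$ in the denominator and delivers the first stated form after substituting $t=F^{-1}(p)=t_p$.

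Finally, to reach the second form I would apply the lower incomplete gamma recurrence $\gamma(s+1,x)=s\,\gamma(s,x)-x^{s}e^{-x}$ with $s=\phi+\tfrac{1}{\alpha}$ and $x=(\lambda t)^\alpha$, so that $\gamma\!\left(\phi+1+\tfrac{1}{\alpha},\cdot\right)=\left(\phi+\tfrac{1}{\alpha}\right)\gamma\!\left(\phi+\tfrac{1}{\alpha},\cdot\right)-(\lambda t)^{\alpha\phi+1}e^{-(\lambda t)^\alpha}$; merging the two $\gamma\!\left(\phi+\tfrac{1}{\alpha},\cdot\right)$ contributions produces the factor $\left(\tfrac{1}{\alpha}+\phi+\lambda\right)$ in the numerator together with the lone exponential correction term. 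All of the steps are routine integration and algebra; the only genuine bookkeeping obstacle is tracking the index shift $\phi+j-1+\tfrac{1}{\alpha}$ and the powers of $\lambda$ through the substitution and the mixture weights so that every prefactor cancels cleanly against $\mu$.
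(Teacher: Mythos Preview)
Your argument is correct and follows exactly the route that the paper itself uses repeatedly (for the moment generating function, the mean residual life, etc.): exploit the two-component generalized-gamma mixture, evaluate each piece with the substitution $y=(\lambda x)^\alpha$, recombine, and normalize by the mean $\mu$ from (\ref{meangwl}). In the paper this particular proposition is stated without proof, so there is nothing further to compare; your derivation is the natural one and fills the gap cleanly.

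One point worth flagging: your application of the recurrence $\gamma(s+1,x)=s\,\gamma(s,x)-x^{s}e^{-x}$ with $s=\phi+\tfrac{1}{\alpha}$ and $x=(\lambda t)^\alpha$ correctly produces the correction term $(\lambda t)^{\alpha\phi+1}e^{-(\lambda t)^\alpha}$, whereas the second displayed formula in the proposition carries the exponent $\alpha\phi-1$. Your exponent is the right one; the statement as printed appears to contain a typo.
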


\section{Methods of estimation}

In this section we describe eight different estimation methods to obtain the estimates of the parameters $\phi, \lambda$ and $\alpha$ of the GWL distribution.

\subsection{Maximum Likelihood Estimation}

Among the statistical inference methods, the maximum likelihood method is widely used due its better asymptotic properties. Under the maximum likelihood method, the estimators are obtained from maximizing the likelihood function (see for example, Casella \& Berger, 2002). Let $T_1,\ldots,T_n$ be a random sample such that $T\sim \f {GWL}(\phi,\lambda,\alpha)$. In this case, the likelihood function from (\ref{fdpgwl}) is given by,

\begin{equation}\label{veroggwl}
L(\phi,\lambda,\alpha;\boldsymbol{t})=\frac{\alpha^n\lambda^{n\alpha\phi}}{(\lambda+\phi)\Gamma(\phi)^n}\left\{\prod_{i=1}^n{t_i^{\alpha\phi-1}}\right\}\prod_{i=1}^n{\left(\lambda+ (\lambda t_i)^\alpha \right)}\exp\left\{-\lambda^{\alpha}\sum_{i=1}^n t_i^\alpha\right\}. \end{equation}

The log-likelihood function $l(\phi,\lambda,\alpha;\boldsymbol{t})=\log{L(\phi,\lambda,\alpha;\boldsymbol{t})}$ is given by,
\begin{equation}\label{veroggwl2}
\begin{aligned}
l(\phi,\lambda,\alpha;\boldsymbol{t})=\ & n\log\alpha + n\alpha\phi\log\lambda-n\log(\lambda+\phi)-n\log\Gamma(\phi) +(\alpha\phi-1)\sum_{i=1}^{n}\log(t_i) \\ &+ \sum_{i=1}^{n}\log\left(\lambda+ (\lambda t_i)^\alpha \right)-\lambda^{\alpha}\sum_{i=1}^n t_i^\alpha .
\end{aligned}
\end{equation}

From the expressions $\frac{\partial}{\partial \phi}l(\phi,\lambda,\alpha;\boldsymbol{t})=0$, $\frac{\partial}{\partial \lambda}l(\phi,\lambda,\alpha;\boldsymbol{t})=0$, $\frac{\partial}{\partial \alpha}l(\phi,\lambda,\alpha;\boldsymbol{t})=0$, we get the likelihood equations,
\begin{equation}\label{verogg21} 
n\hat\alpha\log(\hat\lambda)+\hat\alpha\sum_{i=1}^{n}\log(t_i)=\frac{n}{\hat\lambda+\hat\phi} +n\psi(\hat\phi)
\end{equation}
\begin{equation}\label{verogg22} 
 \frac{n\hat\alpha\hat\phi}{\hat\lambda}+\sum_{i=1}^{n}\frac{1+\hat\alpha\hat\lambda^{\hat\alpha-1}t_{i}^{\hat\alpha}}{\hat\lambda+(t_i)^{\hat\alpha}}= \hat{\alpha}\hat{\lambda}^{\hat\alpha-1}\sum_{i=1}^n t_{i}^{\hat\alpha}+\frac{n}{\hat\lambda+\hat\phi} 
\end{equation}
\begin{equation}\label{verogg23} 
\dfrac{n}{\hat{\alpha}}+n\hat\phi\log(\hat\lambda)+\hat\phi\sum_{i=1}^n\log(t_i)+\sum_{i=1}^n\frac{ (\hat\lambda t_{i})^{\hat\alpha}\log(\hat\lambda t_i)}{\hat\lambda+(\hat\hat\lambda t_{i})^{\hat\alpha}}=\hat\lambda^{\hat\alpha}\sum_{i=1}^n{t_i}^{\hat\alpha}\log(\hat\lambda t_i), 
\end{equation}
where $\psi(k)=\frac{\partial}{\partial k}\log\Gamma(k)=\frac{\Gamma'(k)}{\Gamma(k)}$. The solutions of  such non-linear system provide the maximum likelihood estimators. Numerical methods such as Newton-Rapshon are required to find the solution of the nonlinear system. Note that from (\ref{verogg21}) and (\ref{verogg23}) and after some algebra we have
\begin{equation}\label{verogg24} 
\hat\alpha=\frac{1}{\left(n\log(\hat\lambda)+\sum_{i=1}^{n}\log(t_i)\right)}\left(\frac{n}{\hat\lambda+\hat\phi} +n\psi(\hat\phi)\right)
\end{equation}
\begin{equation}\label{verogg25} 
\hat\phi=\frac{\left(\hat\lambda^{\hat\alpha}\sum_{i=1}^n{t_i}^{\hat\alpha}\log(\hat\lambda t_i)-\sum_{i=1}^n\frac{ (\hat\lambda t_{i})^{\hat\alpha}\log(\hat\lambda t_i)}{\hat\lambda+(\hat\hat\lambda t_{i})^{\hat\alpha}}-\dfrac{n}{\hat{\alpha}}\right)}{\left(n\log(\hat\lambda)+\sum_{i=1}^{n}\log(t_i)\right)}, 
\end{equation}

The obtained MLE's (maximum likelihood estimators) of $\phi, \lambda$ and $\alpha$ are biased considering small sample sizes. For large sample sizes the obtained estimators are not biased and they are asymptotically efficient. The MLE estimates are asymptotically normal distributed with a joint multivariate normal distribution given by,
\begin{equation} (\hat\phi,\hat\lambda,\hat\alpha) \sim N_3[(\phi,\lambda,\alpha),I^{-1}(\phi,\lambda,\alpha)] \mbox{ for } n \to \infty , \end{equation}
where $I(\phi,\lambda,\alpha)$ is the Fisher information matrix given by,
\begin{equation}\label{mfishergg}
I(\phi,\lambda,\alpha)=
\begin{bmatrix}
I_{\phi,\phi}(\phi,\lambda,\alpha) & I_{\phi,\lambda}(\phi,\lambda,\alpha) & I_{\phi,\alpha}(\phi,\lambda,\alpha) \\
I_{\phi,\lambda}(\phi,\lambda,\alpha) & I_{\lambda,\lambda}(\phi,\lambda,\alpha)  & I_{\lambda,\alpha}(\phi,\lambda,\alpha) \\
I_{\phi,\alpha}(\phi,\lambda,\alpha) & I_{\lambda,\alpha}(\phi,\lambda,\alpha) & I_{\alpha,\alpha}(\phi,\lambda,\alpha)
\end{bmatrix} ,
\end{equation}
where the elements of the matrix 𝐼are given in the appendix. 

\subsection{Moments Estimators}
The method of moments is one of the oldest method used for estimating parameters in statistical models. The moments estimators (MEs) of the GLW distribution can be obtained by equating the first three theoretical moments (\ref{rmgwl}) with the sample moments $\bar{x}=\frac{1}{n}\sum_{i=1}^n t_i$, $\frac{1}{n}\sum_{i=1}^n t_i^2$ and $\frac{1}{n}\sum_{i=1}^n t_i^3$ respectively,
\begin{equation*}\label{moments1}
\frac{1}{n}\sum_{i=1}^n t_i = \frac{\left(\frac{1}{\alpha}+\phi+\lambda\right)\Gamma(\frac{1}{\alpha}+\phi)}{(\lambda+\phi)\lambda \Gamma(\phi)} \ ,  \ \frac{1}{n}\sum_{i=1}^n t_i^2 = \frac{\left(\frac{2}{\alpha}+\phi+\lambda\right)\Gamma(\frac{2}{\alpha}+\phi)}{(\lambda+\phi)\lambda^2 \Gamma(\phi)}
\end{equation*}
\begin{equation*}
\mbox{ and } \ \frac{1}{n}\sum_{i=1}^n t_i^3 = \frac{\left(\frac{3}{\alpha}+\phi+\lambda\right)\Gamma(\frac{3}{\alpha}+\phi)}{(\lambda+\phi)\lambda^3 \Gamma(\phi)}.
\end{equation*}

Therefore, the moments estimators $\hat{\phi}_{ME}$, $\hat{\lambda}_{ME}$ and $\hat{\alpha}_{ME}$, can be obtained by solving the non-linear equation
\begin{equation*}
 \frac{\left(\frac{j}{\alpha}+\phi+\lambda\right)\Gamma(\frac{j}{\alpha}+\phi)}{(\lambda+\phi)\lambda^j \Gamma(\phi)}-\frac{1}{n}\sum_{i=1}^n t_i^j =0 , \quad  j=1,2,3.
\end{equation*}

\subsection{Ordinary and Weighted Least-Square Estimate}
Let $t_{(1)}, t_{(2)},\cdots,t_{(n)}$ denotes the order statistics (we assume the same notation for the next subsections) of the  random sample of size $n$ from a distribution function $F(\boldsymbol{t}| \phi,\lambda,\alpha)$. The least square estimators $\hat{\phi}_{LSE}$, $\hat{\lambda}_{LSE}$ and $\hat{\alpha}_{LSE}$, can be obtained by minimizing
\begin{equation*}
V\left( \phi,\lambda,\alpha\right) = \sum_{i=1}^{n}\left[ F\left( t_{(i)}\mid \phi,\lambda,\alpha \right) - \frac {i}{n+1} \right]^{2},
\end{equation*}
with respect to $\phi, \lambda $ and $\alpha$, where $F(\boldsymbol{t}| \phi,\lambda,\alpha)$ is given by (\ref{densagwl}).
Equivalently, they can be obtained by solving the non-linear equations:
\begin{equation*}
\begin{aligned}
&\sum_{i=1}^{n}\left[ F\left( t_{(i)}\mid \phi,\lambda,\alpha \right) - \frac {i}{n+1}\right] \Delta_{j}\left( t_{(i)} \mid \phi,\lambda,\alpha \right) = 0, \quad  j=1,2,3.
\end{aligned}
\end{equation*}
where
\begin{equation}\label{delta1}
\begin{aligned}
\Delta_{1}\left( t_{(i)}\mid \phi,\lambda,\alpha \right) = &\frac{\partial}{\partial \phi} F\left( t_{(i)}\mid \phi,\lambda,\alpha \right), \,
\Delta_{2}\left( t_{(i)}\mid \phi,\lambda,\alpha \right) = \frac{\partial}{\partial \lambda} F\left( t_{(i)}\mid \phi,\lambda,\alpha \right) \\ &
\mbox{ and } \Delta_{3}\left( t_{(i)}\mid \phi,\lambda,\alpha \right) = \frac{\partial}{\partial \alpha} F\left( t_{(i)}\mid \phi,\lambda,\alpha \right)
\end{aligned}
\end{equation}%

Note that, the computation of $\Delta_{i}$ for $i=1,2,3$ involves the solutions of partial derivatives of the lower incomplete gamma function. However this can be easily done numerically with high precision.

The weighted least-squares estimates (WLSEs), $\hat{\phi}_{WLSE}$, $\hat{\lambda}_{WLSE}$ and $\hat{\alpha}_{WLSE}$, can be obtained by minimizing
\begin{equation*}
W\left( \phi,\lambda,\alpha \right) = \sum_{i=1}^{n}
\frac {\left( n+1\right)^{2}\left( n+2\right)}{i\left( n-i+1\right)}
\left[ F\left( t_{(i)}\mid \phi,\lambda,\alpha \right) - \frac {i}{n+1} \right]^{2}.
\end{equation*}
These estimates can also be obtained by solving the non-linear equations:
\begin{equation*}
\sum_{i=1}^{n}\frac {\left( n+1\right)^{2}\left( n+2\right)}{i\left( n-i+1\right)}
\left[ F\left( t_{(i)}\mid \phi,\lambda,\alpha \right) -
\frac {i}{n+1} \right] \Delta_{j}\left( t_{(i)}\mid \phi,\lambda,\alpha\right) = 0, \quad j=1,2,3,
\end{equation*}
where $\Delta _{1}\left( \cdot \mid \phi,\lambda,\alpha \right)$, $\Delta _{2}\left( \cdot \mid \phi,\lambda,\alpha \right) $ and
$\Delta
_{3}\left( \cdot \mid \phi,\lambda,\alpha \right) $ are given respectively in (\ref{delta1}).

\subsection{Method of Maximum Product of Spacings}
%====================================================================

The maximum product of spacings (MPS) method is a powerful alternative to MLE for the estimation of the unknown parameters of continuous univariate distributions. Proposed by Cheng and Amin (1979, 1983), these method was also independently developed by Ranneby (1984) as an approximation to the Kullback-Leibler information measure. Cheng and Amin (1983) proved desirable properties of the MPS such as, asymptotic efficiency and invariance, they also proved that the consistency of maximum product of spacing estimators holds under much more general conditions than for maximum likelihood estimators.

 Let $D_{i}(\phi,\lambda,\alpha)=F\left( t_{(i)}\mid\phi,\lambda,\alpha \right)
-F\left( t_{(i-1)}\mid \phi,\lambda,\alpha \right)$, for  $i=1,2,\ldots
,n+1,$ be the uniform spacings of a random sample from the GWL distribution, where $F(t_{(0)}\mid \phi,\lambda,\alpha)=0$ and $F( t_{(n+1)}\mid
\phi,\lambda,\alpha)=1.$ Clearly \
$\sum_{i=1}^{n+1} D_i (\phi,\lambda,\alpha) =1$. The maximum product of spacings estimates $\hat{\phi}_{MPS}$, $\hat{\lambda}_{MPS}$ and $\hat{\alpha}_{MPS}$ are obtained by maximizing the geometric
mean of the spacings
\begin{equation}
G\left( \phi,\lambda,\alpha\right) =\left[ \prod\limits_{i=1}^{n+1}D_{i}( \phi,\lambda,\alpha)\right] ^{%
\frac{1}{n+1}},  \label{G}
\end{equation}%
with respect to $\phi$, $\lambda$ and $\alpha$, or, equivalently, by maximizing  the logarithm of the geometric mean of sample spacings
\begin{equation}
H\left( \phi,\lambda,\alpha\right) =\frac{1}{n+1}\sum_{i=1}^{n+1}\log
D_{i} ( \phi,\lambda,\alpha).
\end{equation}

The estimates $\hat{\phi}_{MPS}$, $\hat{\lambda}_{MPS}$ and $\hat{\alpha}_{MPS}$ 
of the parameters $\phi$, $\lambda$ and $\alpha$ can  be obtained by solving
the nonlinear equations%
\begin{equation}
\frac{1}{n+1}%
\sum\limits_{i=1}^{n+1}\frac{1}{D_{i}(\phi,\lambda,\alpha))} \left[ \Delta_j
(t_{(i)} |  \phi,\lambda,\alpha) - \Delta_j (t_{(i-1)} |  \phi,\lambda,\alpha)
\right] =0, \quad j=1,2,3,
\end{equation}
where $\Delta _{1}\left( \cdot \mid \phi,\lambda,\alpha \right)$, $\Delta _{2}\left( \cdot \mid \phi,\lambda,\alpha \right) $ and
$\Delta
_{3}\left( \cdot \mid \phi,\lambda,\alpha \right) $ are given respectively in (\ref{delta1}). Note that if $t_{(i+k)}=t_{(i+k-1)}=\ldots=t_{(i)}$ we get $D_{i+k}(\phi,\lambda,\alpha)=D_{i+k-1}(\phi,\lambda,\alpha)=\ldots=D_{i}(\phi,\lambda,\alpha)=0$. Therefore, the MPS estimators are sensitive to closely spaced observations, especially ties. When the ties are due to multiple observations, $D_{i}(\phi,\lambda,\alpha)$ should be replaced by the corresponding  likelihood $f(t_{(i)},\phi,\lambda,\alpha)$ since $t_{(i)}=t_{(i-1)}$. Under mild conditions for the GWL distribution the MPS estimators are asymptotically normal distributed (see Cheng and Stephens, 1989, for more details) with a joint trivariate normal distribution given by,
\begin{equation*}
(\hat{\phi}_{MPS},\hat{\lambda}_{MPS},\hat{\alpha}_{MPS})\sim N_3\left[(\phi,\lambda,\alpha),I^{-1}(\phi,\lambda,\alpha))\right] \mbox{ as } n \to \infty .
\end{equation*}

\subsection{The Cramer-von Mises minimum distance estimators}
%====================================================================

The Cramer-von Mises estimator is a type of minimum distance estimators (also called maximum goodness-of-fit estimators) and is based on the difference between the estimate of the cumulative distribution function and the empirical distribution
function (see, D'Agostino \& Stephens, 1986; Luce\~{n}o, 2006).

MacDonald (1971) motivate the choice of the Cramer-von Mises type minimum distance estimators providing empirical evidence that the bias of the estimator is smaller than the other minimum distance estimators. The Cramer-von Mises estimates $\hat{\phi}_{CME}$, $\hat{\lambda}_{CME}$ and $\hat{\alpha}_{CME}$ of the parameters $\phi$, $\lambda$ and $\alpha$ are
obtained by minimizing 
\begin{equation}
C(\phi,\lambda,\alpha) =\frac{1}{12n}+\sum_{i=1}^{n}\left( F\left(
t_{(i)}\mid \phi,\lambda,\alpha\right) -{\frac{2i-1}{2n}}\right) ^{2},
\end{equation}
with respect to $\phi$, $\lambda$ and $\alpha$. These estimates can also be obtained by solving the non-linear equations:
\begin{equation*}
\sum_{i=1}^{n}\left( F\left( t_{(i)}\mid \phi,\lambda,\alpha \right) -{\frac{2i-1}{2n%
}}\right) \Delta _{j}\left( t_{(i)}\mid \phi,\lambda,\alpha \right)  =0, \quad j=1,2,3,
\end{equation*}%
where $\Delta _{1}\left( \cdot \mid \phi,\lambda,\alpha \right)$, $\Delta _{2}\left( \cdot \mid \phi,\lambda,\alpha \right) $ and
$\Delta
_{3}\left( \cdot \mid \phi,\lambda,\alpha \right) $ are given respectively in (\ref{delta1}).

\subsection{The Anderson-Darling and Right-tail Anderson-Darling estimators}
%====================================================================
Other type of minimum distance estimators  is based on an Anderson-Darling statistic and is known as the Anderson-Darling estimator. The Anderson-Darling estimates $\widehat{\phi }_{ADE}, \widehat{\lambda }_{ADE}$ and $\widehat{\alpha}%
_{ADE}$ of the parameters $\phi, \lambda$ and $\alpha$ are obtained
by minimizing, with respect to $\phi$, $\lambda$ and $\alpha$, the function
\begin{equation}
A(\phi,\lambda,\alpha) =-n-\frac{1}{n}\sum_{i=1}^{n}\left( 2i-1\right)
\left(\, \log F\left( t_{(i)}\mid \phi,\lambda,\alpha \right)+ \log S\left(
t_{(n+1-i)}\mid \phi,\lambda,\alpha\right) \, \right) .
\end{equation}
These estimates can also be obtained by solving the non-linear equations:
\begin{equation*}
\sum_{i=1}^{n}\left( 2i-1\right) \left[ \frac{\Delta _{j}\left( t_{(i)}\mid
\phi,\lambda,\alpha \right) }{F\left( t_{(i)}\mid \phi,\lambda,\alpha \right) }-
\frac{\Delta_{j}\left( t_{(n+1-i)}\mid \phi,\lambda,\alpha \right) }{S\left( t_{(n+1-i)}\mid \phi,\lambda,\alpha \right) }\right] =0, \ \ j=1,2,3.
\end{equation*}

The Right-tail Anderson-Darling estimates  $\widehat{\phi }_{RADE}, \widehat{\lambda }_{RADE}$ and $\widehat{\alpha}%
_{RADE}$ of the parameters $\phi, \lambda$ and $\alpha$ 
are obtained by minimizing, with respect to $\phi $, $\lambda$ and $\alpha$, the function:%
\begin{equation}
R(\phi,\lambda,\alpha) =\frac{n}{2}-2\sum_{i=1}^{n}F\left( t_{i:n}\mid \phi,\lambda,\alpha \right) -\frac{1}{n}\sum_{i=1}^{n}\left( 2i-1\right) \log
S\left( t_{n+1-i:n}\mid \phi,\lambda,\alpha \right).
\end{equation}
These estimates can also be obtained by solving the non-linear
equations:
\begin{equation*}
- 2 \sum_{i=1}^{n} \Delta_{j}\left( t_{i:n}\mid \phi,\lambda,\alpha \right) +\frac{1}{n} \sum_{i=1}^{n}\left( 2i-1\right) \frac{\Delta_{j}\left( t_{_{n+1-i:n}}\mid \phi,\lambda,\alpha \right) }{S%
\left( t_{n+1-i:n}\mid \phi,\lambda,\alpha \right) } =0, \ \ j=1,2,3. 
\end{equation*}%
where $\Delta _{1}\left( \cdot \mid \phi,\lambda,\alpha \right)$, $\Delta _{2}\left( \cdot \mid \phi,\lambda,\alpha \right) $ and
$\Delta
_{3}\left( \cdot \mid \phi,\lambda,\alpha \right) $ are given respectively in (\ref{delta1}).\\

%\newpage
\section{Simulation Study}

In this section we develop an intensive simulation study to compare the efficiency of the different estimation procedures proposed for parameters of the GWL distribution. The following procedure was adopted:

\begin{enumerate}

\item Generate pseudo-random values from the $\f {GWL}(\phi,\lambda,\alpha)$ with size $n$.

\item Using the values obtained in step 1, calculate $\hat{\phi}$, $\hat{\lambda}$ and $\hat{\alpha}$ via 1-MLE, 2-MPS, 3-ADE, 4-RTADE, 5-LSE, 6-WLSE, 7-ME, 8-CME.

\item Repeat the steps $1$ and $2$ $N$ times.

\item Using $\boldsymbol{\hat{\theta}}=(\hat{\phi},\hat{\lambda},\hat{\alpha})$ and $\boldsymbol{\theta}=(\phi,\lambda,\alpha)$, compute
the mean relative estimates (MRE) $\sum_{j=1}^{N}\dfrac{\hat{\theta} 
_{i,j}/\theta_i}{N}$ and the mean square errors (MSE) $\sum_{j=1}^{N}\frac{(\hat{\theta}_{i,j}-\theta _{i})^{2}}{N}$, for $i=1,2,3$.
\end{enumerate}

Considering this approach it is expected that the most efficient estimation method will have MRE's closer to one with MSE's closer to zero. The results were computed using the software R (R Core Team, 2014) using the seed 2015 to generate the pseudo-random values. The chosen values to perform this procedure were $N=10000$ and $n=(50,60,\ldots,250)$. It will be present here results only for $\boldsymbol{\theta}=(2,0.5,0.1)$ for reasons of space. Nevertheless the following results were similar for other choices of $\boldsymbol{\theta}$. Moreover, for this comparison being meaningful, the estimation procedures need to be performed under the same conditions. However, for some particular samples and estimation methods the numerical techniques does not work well in finding the parameters estimates. Therefore, in Figure \ref{fsimulation4} it will be firstly presented the proportion of failure from each method.

\begin{figure}[!htb]
\centering
\includegraphics[scale=0.56]{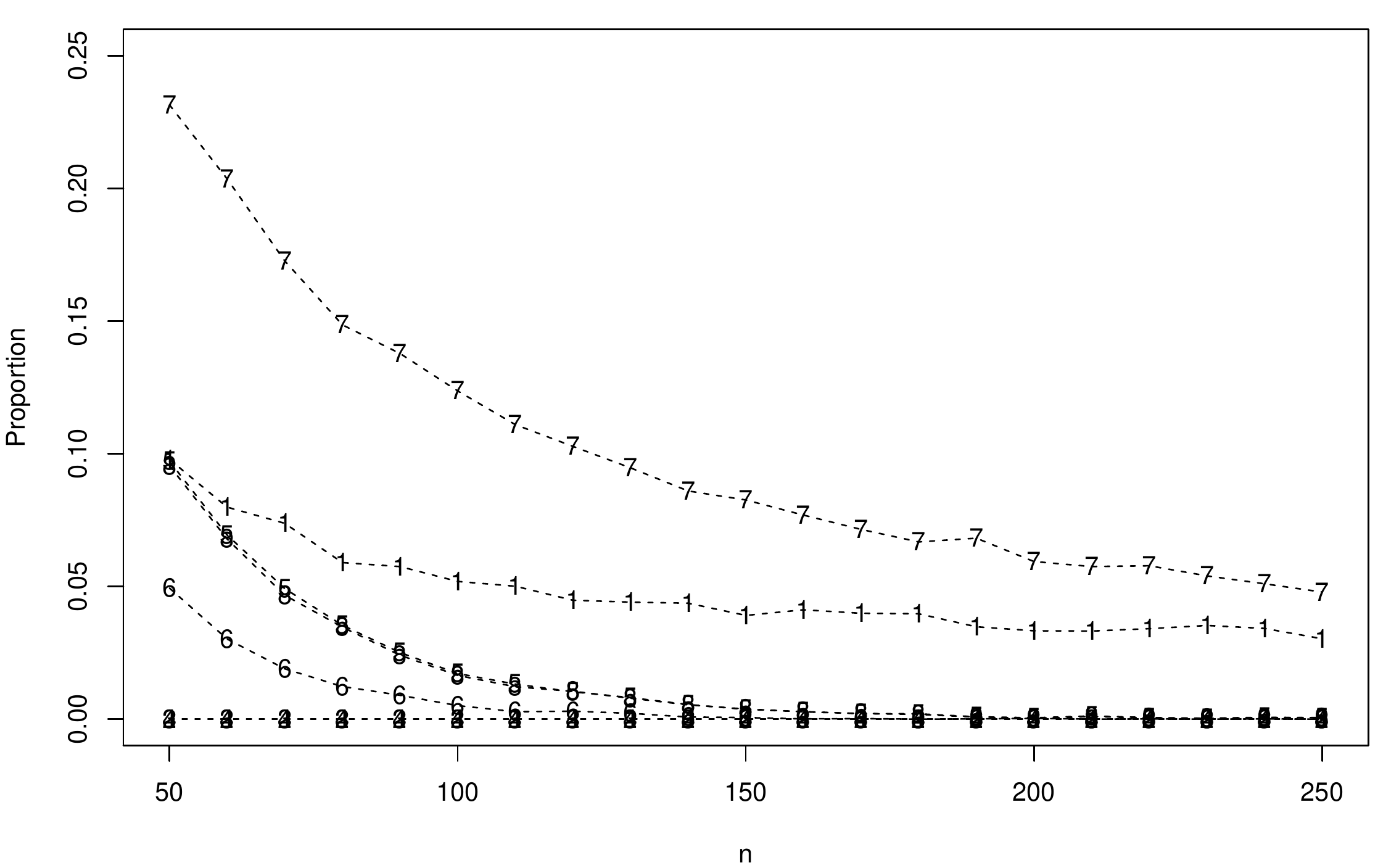}
\caption{Proportion of failure from $N$ simulated samples, considering different values of $n$ obtained using the following estimation method 1-MLE, 2-MPS, 3-ADE, 4-RTADE, 5-LSE, 6-WLSE, 7-ME, 8-CME.}\label{fsimulation4}
\end{figure}

\begin{figure}[!htb]
\centering
\includegraphics[scale=0.72]{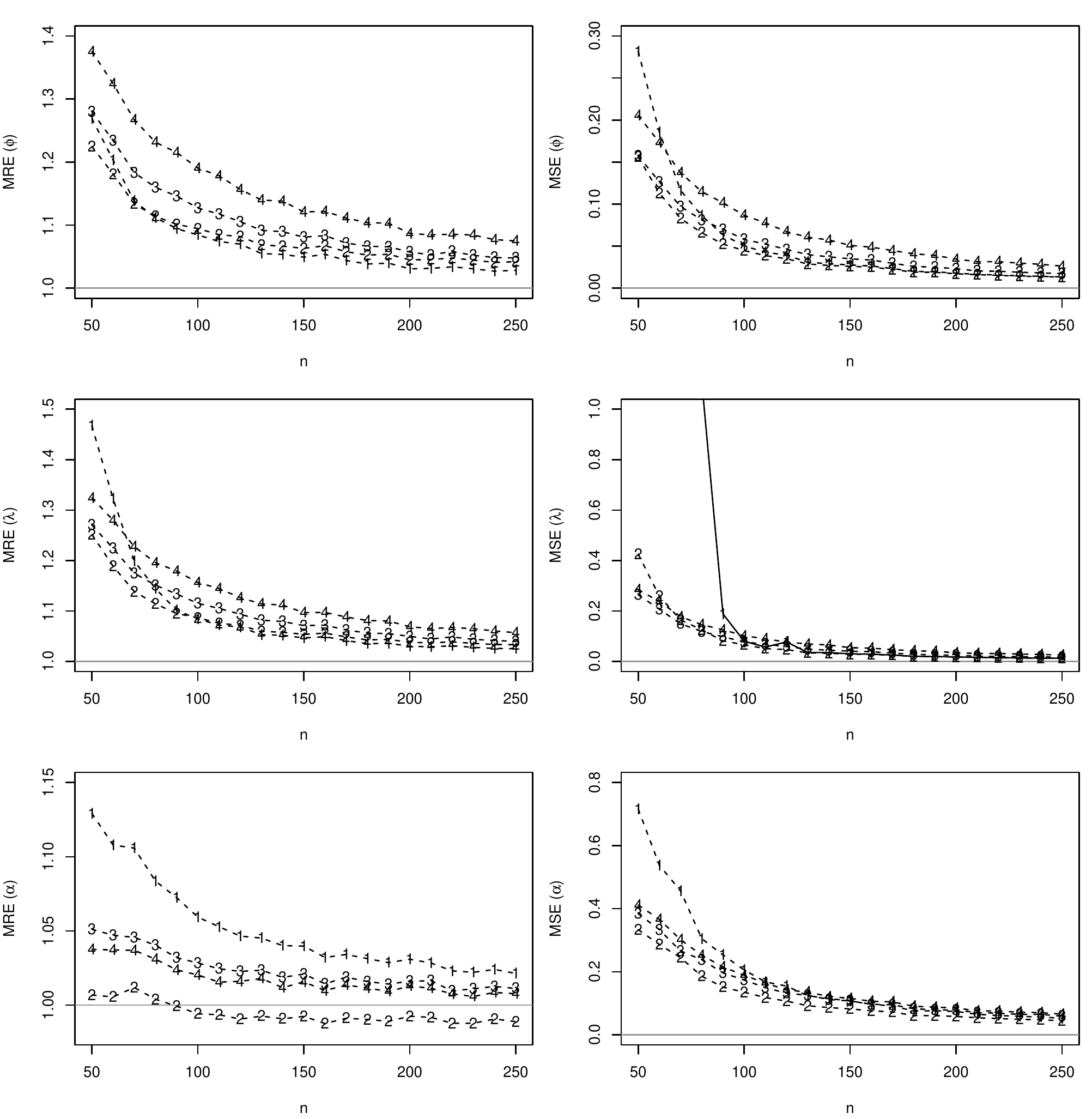}
\caption{MRE's, MSE's related from the estimates of $\phi=0.5, \lambda=0.7$ and $\alpha=1.5$  for N simulated samples, considering different values of $%
n $ obtained using the following estimation method 1-MLE, 2-MPS, 3-ADE, 4-RTADE.}
\label{fsimulation1}
\end{figure}

From Figure \ref{fsimulation4}, we note that the MLE, LSE, WLSE, ME and the CME estimators fail in finding the parameters estimates for a significant number of samples. Therefore, the use of such methods are not recommended for the GLW and we discard such estimation procedures. From now on we consider the MPS, ADE, RADE estimators and also the MLE only for illustrative purpose since it is the most widely used estimation method. Figures \ref{fsimulation1} presents MRE's, MSE's from the estimates of $\phi, \lambda$ and $\alpha$ obtained using the MLE, MPS, ADE, RADE for N simulated samples and considering different values of $\boldsymbol{\theta}=(2,0.5,0.1)$ and $n$. The horizontal lines in both figures corresponds to MRE's and MSE's being respectively one and zero. 

Based on these results, we observe that the MSE of the MLE, MPS, ADE and RADE estimators tend to zero for large $n$ and also, as expected, the values of
MRE's tend to one, i.e. the estimates are consistent and asymptotically unbiased for the parameters. For small sample sizes the MLE has the largest MSE's. The MPS has the smaller MSE's with MRE's closer to one for almost all values of n. Additionally, the MPS, ADE and RADE estimators were the only ones that were able to find $\hat\phi,\, \hat\lambda$ and $\hat\alpha$ for all the $2\times 10^6$ generated samples. Therefore, combining all results with the good properties of the MPS method such as consistency, asymptotic efficiency, normality and invariance we conclude that the MPS estimators is a
highly competitive method compared to the maximum likelihood for estimating
the parameters of the  GWL distribution.

\vspace{0.3cm}
\section{Application}
\vspace{0.3cm}

In this section, we compare the GWL distribution fit with several usual three parameters lifetime distributions considering two data sets one with bathtub hazard rate and one with the increasing hazard function. For sake of comparison the following lifetime distributions were considered: The generalized gamma (GG) distribution (Stacy, 1962) with p.d.f
given by  $f(t)= \alpha\,\Gamma(\phi)^{-1}\,\beta^{\alpha\phi}\,t^{\alpha\phi-1}\,e^{-(\beta t)^{\alpha}}$, where $\beta>0, \phi>0$ and $\alpha>0$, the generalized Weibull (GW) distribution (Mudholkar et al., 1996) with p.d.f given by  $f(t)= {(\alpha\phi)}^{-1}{(t/\phi)}^{1/\alpha-1}{(1-\lambda{(t/\phi)}^{1/\alpha} )}^{1/\lambda-1}$, where $\lambda\in\mathbb{R}$ the generalized exponential-Poisson (GEP) distribution (Barreto-Souza and Cribatari-Neto, 2009) with p.d.f given by  $f(t)= \left(\alpha\beta\phi/{(1-e^{-\phi})}^{\alpha}\right)e^{-\phi-\beta t+\phi\exp(-\beta t)}{\left(1-e^{-\phi+\phi\exp(-\beta t)}\right)}^{\alpha-1}\quad$ and the exponentiated Weibull (EW) distribution (Mudholkar et al., 1995) with p.d.f  $f(t)= \alpha\phi{\beta}^{-1}{(t/\beta)}^{\alpha-1}\times$ $\times\exp\left(-{(t/\beta)}^{\alpha} \right) \left(1-\exp\left(-{(t/\beta)}^{\alpha} \right) \right)^{\phi-1}
$.

Firstly,  it will be considered the TTT-plot (total time on test) in order to verify the behaviour of the empirical hazard function. Developed by Barlow and Campo (1975) the TTT-plot is achieve through plot of the values $[r/n,G(r/n)]$ where $G(r/n)= \left(\sum_{i=1}^{r}t_i +(n-r)t_{(r)}\right)$ $/{\sum_{i=1}^{n}t_i}$, $r=1,\ldots,n, i=1,\ldots,n$ and $t_{(i)}$ is the order statistics. If the curve is concave (convex), the hazard function is increasing (decreasing). When it starts convex and then concave (concave and then convex) the hazard function will have a bathtub (inverse bathtub) shape. Secondly, the discrimination criterion methods are: AIC (Akaike Information Criteria) and AICc (Corrected Akaike information criterion) computed respectively by $AIC=-2l(\hat{\boldsymbol{\theta}};\boldsymbol{t})+2k$ and $AICc=AIC+2\,k\,(k+1){(n-k-1)}^{-1}$, where $k$ is the number of parameters to be fitted and $\hat{\boldsymbol{\theta}}$ is estimation of $\boldsymbol{\theta}$. Given a set of candidate models for $\boldsymbol{t}$, the best one provide the minimum values. To check the goodness of fit it will be consider the Kolmogorov-Smirnov (KS) test . This procedure is based on the KS statistic $D_n=\sup\left\vert F_n(t)-F(t;\phi ,\lambda ,\alpha)\right\vert$, where $\sup t$ is the supremum of the set of distances, $F_n(t)$ is the empirical distribution function and $F(t;\alpha ,\beta ,\lambda)$ is c.d.f. In these case, testing the null hypothesis that the data comes from $F(t;\alpha ,\beta ,\lambda)$, and with significance level of $5\%$, we will reject the null hypothesis if the returned p-value is smaller than $0.05$.

\subsection{Lifetimes data}

Presented by Aarset (1987) in table \ref{arsetdd} is available the dataset is related to the lifetime in hours of $50$ devices put on test

\begin{table}[ht]
\caption{Lifetimes data (in hours) related to a device put on test.}
\centering % used for centering table
\begin{tabular}{c c c c c c c c c c c c c} % centered columns (4 columns)
\hline % inserts single horizontal line
0.1 & 0.2 & 1 & 1 & 1 & 1 & 1 & 2 & 3 & 6 & 7 & 11 & 12 \\
18 & 18 & 18 & 18 & 18 & 21 & 32 & 36 & 40 & 45 & 46 & 47 & 50 \\
55 & 60 & 63 & 63 & 67 & 67 & 67 & 67 & 72 & 15 & 79 & 82 & 82 \\
83 & 84 & 84 & 84 & 85 & 85 & 85 & 85 & 85 & 86 & 86 \\ [0ex] 
\hline %inserts single line
\end{tabular}\label{arsetdd}
\end{table}

Figure \ref{grafico-obscajust2} shows (left panel) the TTT-plot, (middle
panel) the fitted survival superimposed to the empirical survival function and (right panels) the hazard function adjusted by GWL distribution. Table 9 presents  the AIC and AICc criteria and the p-value from the KS test for all fitted distributions considering the Aarset dataset.

\begin{figure}[!htb]
\centering
\includegraphics[scale=0.55]{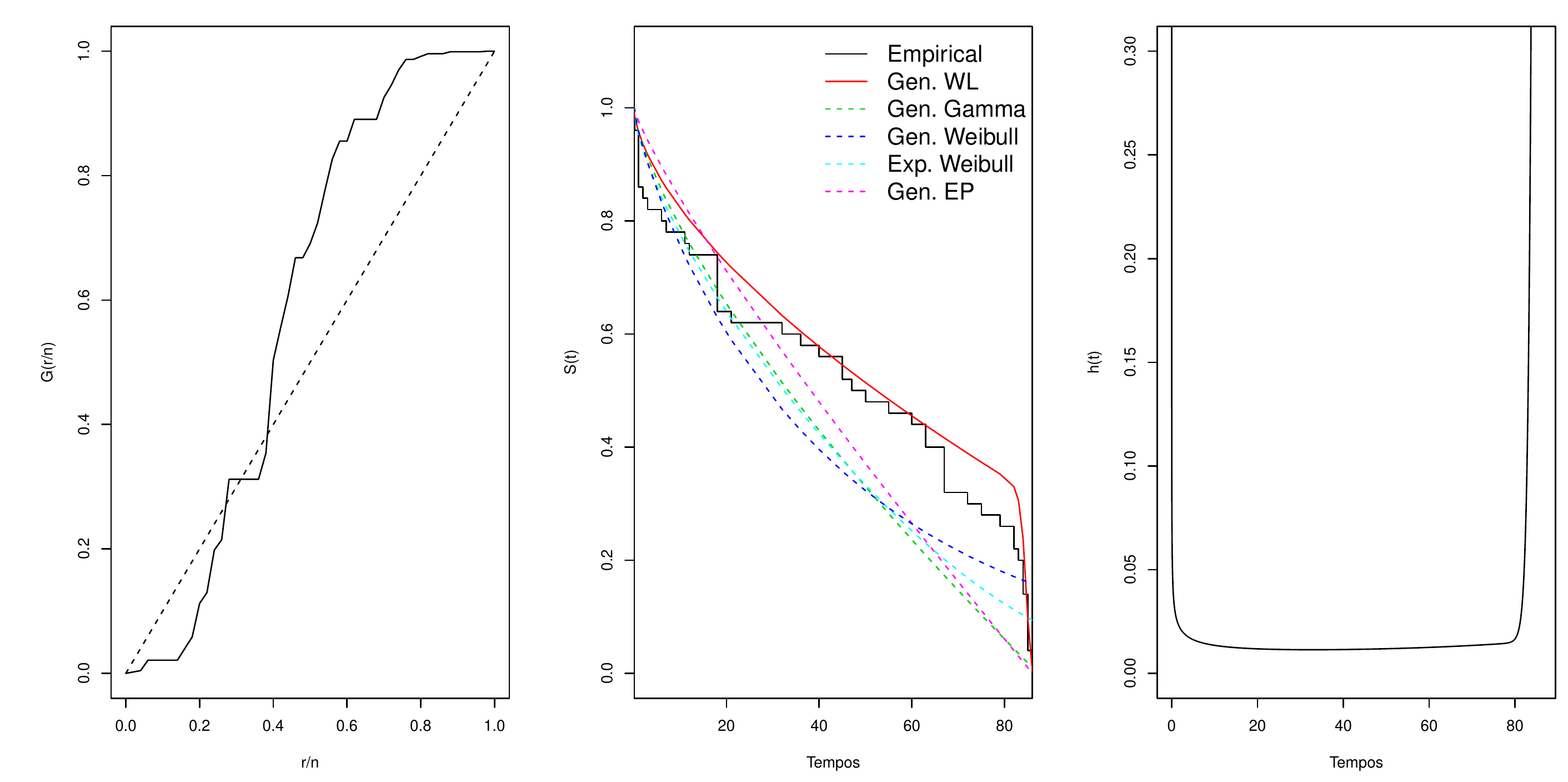}
\caption{(left panel) the TTT-plot, (middle
panel) the fitted survival superimposed to the empirical survival function and (right panels) the hazard function adjusted by GWL distribution}\label{grafico-obscajust2}
\end{figure}

\begin{table}[ht]
\caption{Results of AIC and AICc criteria and the p-value from the KS test for all fitted distributions considering the Aarset dataset.}
\centering % used for centering table
\begin{center}
  \begin{tabular}{ c | c | c | c | c | c}
    \hline
		Criteria & Gen. WL & \ Gen. Gamma  \ & \ Gen. Weibull &  \ Exp. Weibull &  \   Gen. EP \\ \hline
      AIC & \textbf{418.031} & 448.294 & 430.055 & 463.674 & 486.255   \\ \hline
			AICc & \textbf{412.552} & 442.816 & 424.576 & 458.196 & 480.777  \\ \hline
			KS   & \textbf{0.5787} & 0.0115 & 0.0453 & 0.0222 & 0.0302       \\ \hline
  \end{tabular}
\end{center}
\end{table}

Comparing the empirical survival function with the adjusted distributions it can be observed a better fit for the GWL distribution among the chosen models. These result is confirmed from AIC and AICC since GWL distribution has the minimum values and the p-values returned from the KS test are greater than 0.05. Moreover, considering a significance level of $5\%$, the others models are not able to fit the proposed data. Table 2 displays the MPS estimates, standard errors  and the confidence intervals for $\phi, \lambda$ and $\alpha$ of the GWL distribution. 

\begin{table}[!ht]
\caption{MPS estimates, Standard-error and  $95\%$ confidence intervals (CI) for $\phi, \lambda$ and $\alpha$}
\centering % used for centering table
\begin{center}
  \begin{tabular}{ c | c |  c| c }
    \hline
		$\boldsymbol{\theta}$  & $\boldsymbol{\hat\theta}_{MPS}$ & S.E($\boldsymbol{\hat\theta}$) & CI$_{95\%}(\boldsymbol{\theta})$ \\ \hline
    \ \ $\phi$ \ \       &  0.0057 & 0.00091 &  ( 0.0039; 0.0075)  \\ \hline
    \ \ $\lambda$   \ \  & 0.0118 & 0.00003 &  (0.0117;  0.0118) \\ \hline
		\ \ $\alpha$   \ \   & 110.4964 & 6.58144 &  (97.5971; 123.3958) \\ \hline
  \end{tabular}
\end{center}
\end{table}

\subsection{Average flows data}

The study of average flows has been proved of high importance to protect and maintain aquatic resources in streams and rivers (Reiser et al., 1989). In this section, we consider a real data set related to the average flows (m$^3$/s) of the Cantareira system during January at S\~{a}o Paulo city in Brazil. Its worth mentioning that the Cantareira system provide water to $9$ million people in the S\~{a}o Paulo metropolitan area. The data set available in Table \ref{cantdaset} was obtained from the website of the National Water Agency including a period from 1930 to 2012.

\begin{table}[ht]
\caption{January average flows (m$^3$/s) of the Cantareira system.}
\centering % used for centering table
\begin{tabular}{c c c c c c c c c c c} % centered columns (4 columns)
\hline % inserts single horizontal line
82.0 & 80.9 & 102.5 & 65.3 & 65.5 & 47.1 & 53.0 & 139.4 & 82.4 & 80.2 & 92.5 \\ 
50.0 & 50.4 & 50.2 & 36.2 & 35.9 & 100.0 & 94.2 & 78.1 & 54.8 & 86.9 & 80.1 \\ 
60.3 & 26.9 & 48.5 & 51.0 & 51.1 & 84.5 & 76.9 & 69.4 & 77.3 & 109.2 & 55.3 \\ 
106.3 & 30.5 & 94.2 & 87.3 & 115.0 & 70.0 & 31.3 & 87.1 & 35.9 & 67.7 & 55.1 \\ 
89.9 & 50.1 & 52.6 & 82.0 & 54.1 & 44.3 & 69.2 & 94.4 & 83.4 & 122.7 & 88.1 \\ 
73.3 & 35.9 & 82.4 & 64.9 & 90.8 & 80.4 & 55.3 & 31.4 & 45.7 & 43.6 & 45.8 \\ 
96.8 & 85.8 & 43.6 & 122.3 & 66.5 & 41.0 & 75.4 & 79.4 & 34.8 & 78.8 & 52.4 \\ 
77.1 & 47.0 & 67.4 & 132.8 & 144.9 & 64.1 \\ [0ex] 
\hline %inserts single line
\end{tabular}\label{cantdaset}
\end{table}

In this section we consider the ML estimator, showing that both MPS or MLE could be used successfully in applications. Figure \ref{grafico-obscajust2} shows (left panel) the TTT-plot, (middle
panel) the fitted survival superimposed to the empirical survival function and (right panels) the hazard function adjusted by GWL distribution. Table \ref{adjusddc2} presents  the AIC and AICc criteria and the p-value from the KS test for all fitted distributions considering the data set related to the January average flows (m$^3$/s) of the Cantareira system.

\begin{figure}[!htb]
\centering
\includegraphics[scale=0.55]{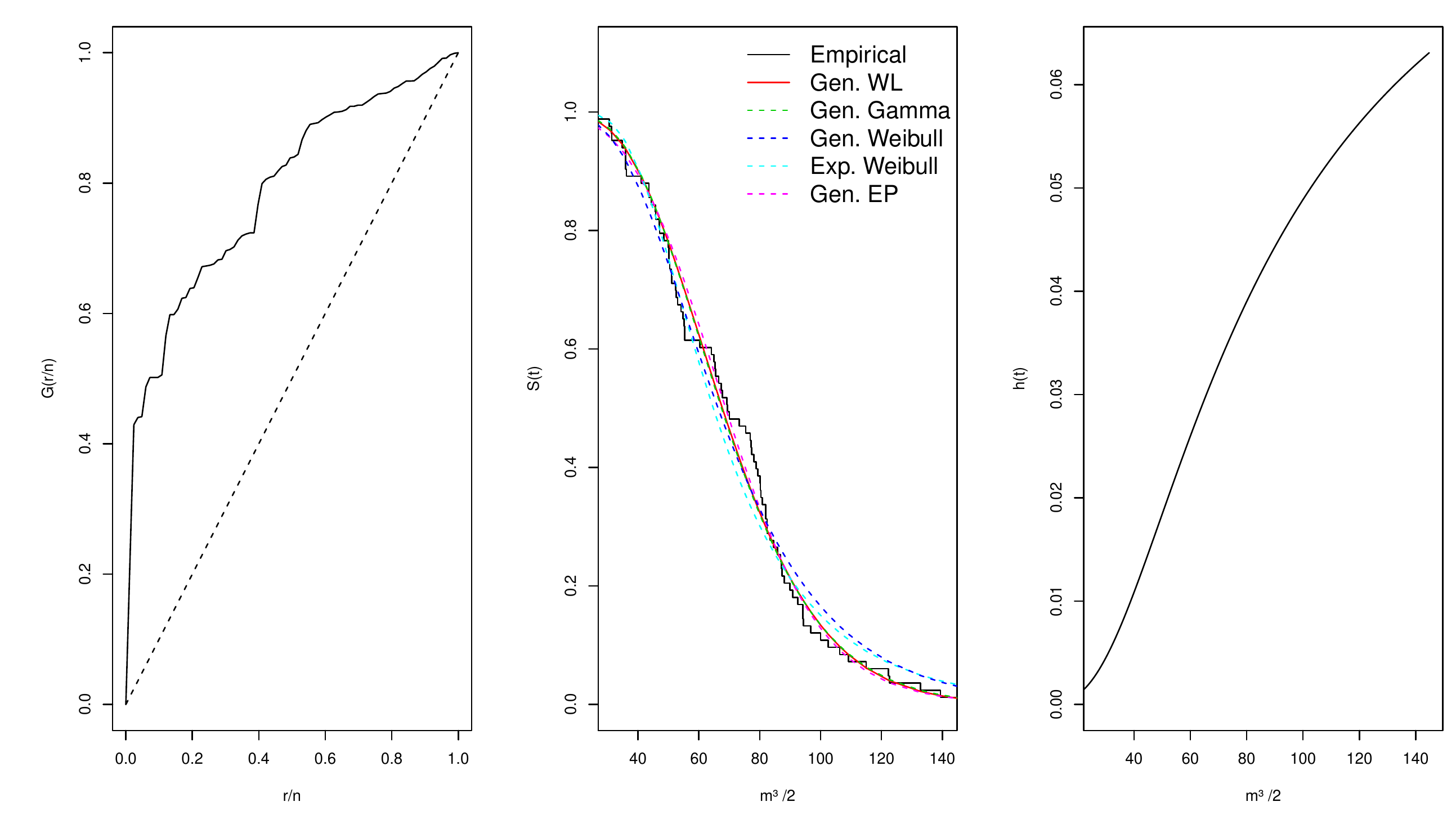}
\caption{(left panel) the TTT-plot, (middle
panel) the fitted survival superimposed to the empirical survival function and (right panels) the hazard function adjusted by GWL distribution}\label{grafico-obscajust2}
\end{figure}

\begin{table}[ht]
\caption{Results of AIC and AICc criteria and the p-value from the KS test for all fitted distributions considering the data set related to the january average flows (m$^3$/s) of the Cantareira system.}
\centering % used for centering table
\begin{center}
  \begin{tabular}{ c | c | c | c | c | c}
    \hline
		Criteria & Gen. WL & \ Gen. Gamma  \ & \ Gen. Weibull &  \ Exp. Weibull &  \   Gen. EP \\ \hline
      AIC & \textbf{775.431} & 775.461 & 777.280 & 780.304 & 778.873   \\ \hline
			AICc & \textbf{769.735} & 769.765 & 771.584 & 774.608 & 773.176 \\ \hline
			KS   & \textbf{0.4683} & 0.4223 & 0.3935 & 0.1654 & 0.4599       \\ \hline
  \end{tabular}\label{adjusddc2}
\end{center}
\end{table}

Comparing the empirical survival function with the adjusted distributions it can be observed a better fit for the GWL distribution among the chosen models. These result is confirmed from AIC and AICC since GWL distribution has the minimum values and the p-values returned from the KS test are greater than 0.05. Table \ref{adjusd2} displays the ML estimates, standard errors  and the confidence intervals for $\phi, \lambda$ and $\alpha$ of the GWL distribution. 

\begin{table}[!ht]
\caption{ML estimates, Standard-error and  $95\%$ confidence intervals (CI) for $\phi, \lambda$ and $\alpha$}
\centering % used for centering table
\begin{center}
  \begin{tabular}{ c | c |  c| c }
    \hline
		$\boldsymbol{\theta}$  & $\boldsymbol{\hat\theta}_{MLE}$ & S.E($\boldsymbol{\hat\theta}$) & CI$_{95\%}(\boldsymbol{\theta})$ \\ \hline
    \ \ $\phi$ \ \       &  7.0485 & 1.5425 &  (2.3847; 11.7124)  \\ \hline
    \ \ $\lambda$   \ \  & 0.1244 & 0.0557 &  (0.1183;  0.1305) \\ \hline
		\ \ $\alpha$   \ \   &  0.9579 & 0.1173 &  (0.9310; 0.9849) \\ \hline
  \end{tabular}\label{adjusd2}
\end{center}
\end{table}

\section{Concluding remarks}
\vspace{0.3cm}

In this paper, we propose a new lifetime distribution. The GLW distribution is a straightforwardly generalization of the weighted Lindley distribution proposed by Ghitany et al. (2011), which accommodates increasing, decreasing, decreasing-increasing-decreasing, bathtub, or unimodal hazard functions, making the GWL distribution a flexible model for reliability data. The mathematical properties of the new distribution are discussed. It was also derived the estimation of the parameters of the GWL distribution using eight estimation methods and compared via an intensive simulation study. Most important, from our simulations we observe that the MLE, ME, LSE, WLSE and the CME estimators fail in finding the parameters estimates for a significant number of samples. The simulations show that the MPS (maximum product of spacing) is the most efficient method for estimating the parameters of the GWL distribution in comparison with its competitors. Finally, we analyze two data sets for illustrative purposes, proving that the GWL outperform several usual three parameters lifetime distributions.

%\section*{Acknowledgements}

%We are grateful to the Editorial Board and to the referees for their very valuable comments and suggestions, which led to improvements at the paper. The research was partially supported by CNPq, FAPESP and CAPES of Brazil.

\section*{Appendix}

\begin{equation*}
I_{\phi,\phi}=-E\left[\frac{\partial l(\boldsymbol{\theta};\boldsymbol{t})}{\partial\phi^2} \right]= -\frac{1}{(\lambda+\phi)^2}+\psi'(\theta) \ \ \ \ \ \ \ \ \ \ \ \ \ \ \ \ \ \ \ \ \ \ \ \ \ \ \ \ \ \ \ \ \ \ \ \ \ \	\ \ \  
\end{equation*}
\begin{equation*}
I_{\phi,\lambda}=-E\left[\frac{\partial l(\boldsymbol{\theta};\boldsymbol{t})}{\partial\phi\partial\lambda} \right]= -\frac{\alpha}{\lambda}+\frac{1}{(\lambda+\phi)^2}  \ \ \ \ \ \ \ \ \ \ \ \ \ \ \ \ \ \ \ \ \ \ \ \ \ \ \ \ \ \ \ \ \ \ \ \ \ \ \ \ \ \ \ \ \ 
\end{equation*}
\begin{equation*}
I_{\phi,\alpha}=-E\left[\frac{\partial l(\boldsymbol{\theta};\boldsymbol{t})}{\partial\phi\partial\alpha} \right]= \frac{-\alpha\log(\lambda)-\psi(\phi)+\alpha\log(\lambda)-(\lambda+\phi)^{-1}}{\alpha} \ \ \ \ \ \ \ \ \ \ \ \ 
\end{equation*}
\begin{equation*}
\begin{aligned}
I_{\lambda,\lambda}=&-E\left[\frac{\partial l(\boldsymbol{\theta};\boldsymbol{t})}{\partial\lambda^2} \right]= \frac{\alpha\phi}{\lambda^2} +(\alpha-1)\lambda^{\alpha-2}(\psi(\phi)-\alpha\log(\lambda)+ (\lambda+\phi)^{-1}) \\ & +E\left[\frac{\alpha T^\alpha\lambda^{\alpha-2}\left((\alpha-2)\lambda-(\lambda T)^\alpha \right)}{\left(\lambda+(\lambda T)^\alpha \right)}\right]-\frac{1}{(\lambda+\phi)^2} \ \ \ \ \ \ \ \ \ \ \ \ \ \ \ \ \ \ \ \ \ \ \ \ \ \ \ 
\end{aligned}
\end{equation*}
\begin{equation*}
\begin{aligned}
I_{\alpha,\alpha}=&-E\left[\frac{\partial l(\boldsymbol{\theta};\boldsymbol{t})}{\partial\alpha^2} \right]= \frac{\phi(\lambda+\phi+1)\left(\psi(\phi)^2+\psi(\phi)\right)}{\alpha^2(\lambda+\phi)} +\frac{1}{\alpha^2}\\ &  +\frac{2(\lambda+2\phi+1)\psi(\phi)+2}{\alpha^2(\lambda+\phi)}-E\left[\frac{\lambda(\lambda T)^\alpha\log(\lambda T)^2}{\left(\lambda+(\lambda T)^\alpha \right)}\right] \ \ \ \ \ \ \ \ \ \ \ \ \ \ \ \ \ \ \ \ \ \ \ \
\end{aligned}
\end{equation*}
\begin{equation*}
\begin{aligned}
I_{\alpha,\lambda}=&-E\left[\frac{\partial l(\boldsymbol{\theta};\boldsymbol{t})}{\partial\alpha\partial\lambda} \right]= -\frac{\phi}{\lambda}+\frac{\lambda\left(1+\phi\psi(\phi)\right)+\phi\left(1+ (\phi+1)\psi(\phi+1)\right)}{\lambda(\lambda+\phi)} \\& -E\left[\frac{\left(1+\alpha\lambda^{\alpha-1}T^\alpha\right)(\lambda T)^\alpha\log(\lambda T)}{\left(\lambda+(\lambda T)^\alpha \right)^2} \right] +\frac{\left(\phi+\lambda+1-\frac{1}{\alpha}\right)\Gamma\left(\phi+1-\frac{1}{\alpha} \right)}{(\lambda+\phi)\Gamma(\phi)} \\&  -E\left[\frac{\left(\alpha\lambda^{\alpha-1}T^\alpha\log(\lambda T)+ (\lambda T)^{\alpha-1} \right)}{\left(\lambda+(\lambda T)^\alpha \right)} \right]
\end{aligned}
\end{equation*}

\bibliographystyle{abbrv}

\end{document}